\documentclass[a4paper]{amsart}

\usepackage[english]{babel}
\usepackage[utf8]{inputenc}
\usepackage[T1]{fontenc}
\usepackage{amssymb,amsmath,amsthm,amsfonts}
\usepackage{graphicx}
\usepackage[usenames,dvipsnames]{xcolor}
\usepackage{cancel}
\usepackage[colorlinks]{hyperref}
\usepackage[normalem]{ulem}
\usepackage{dsfont}
\usepackage{enumitem}
\usepackage[showonlyrefs]{mathtools}
\mathtoolsset{showonlyrefs=true}
\newcommand{\N}{\mathbb{N}}

\newcommand{\R}{\mathbb{R}}

\newcommand{\shell}{Q}

\newcommand{\eps}{\varepsilon}

\newcommand{\Om}{\Omega}
\def\N{\mathbb{N}}

\def\F{\mathcal{F}}

\newtheorem{proposition}{Proposition}[section]
\newtheorem{theorem}[proposition]{Theorem}

\newtheorem{lemma}[proposition]{Lemma}
\theoremstyle{definition}

\newtheorem{remark}[proposition]{Remark}
\title{Nonexistence for screened Hartree energies}

\author{Dario Mazzoleni}
\address{Dipartimento di Matematica F. Casorati\\
		Universit\`a di Pavia\\
		Via Ferrata 5, 27100 Pavia, Italy}
\email{dario.mazzoleni@unipv.it}

\author{Riccardo Moraschi}
\address{Dipartimento di Matematica F. Casorati\\
		Universit\`a di Pavia\\
		Via Ferrata 5, 27100 Pavia, Italy}
\email{r.moraschi1@campus.unimib.it}

\author{Berardo Ruffini}
\address{Dipartimento di Matematica, Universit\`a di Bologna, Piazza di Porta S.Donato
5, 40126, Bologna, Italy}
\email{berardo.ruffini@unibo.it}

\begin{document}

\begin{abstract}
We develop an anti-concentration method for volume-constrained nonlocal shape optimization. Precisely, we apply it to prove the nonexistence of minimizers of an optimal design problem driven by a Hartree-type energy in the large mass regimes. The proof refines and extends the ideas from \cite{LuOtto}, where Lu and Otto dealt with nonexistence of minimizers for the Thomas-Fermi-Dirac-von Weizs\"acker energy. We also establish the asymptotic behavior of the associated isoperimetric profile of the energy. 
\end{abstract}

\maketitle
	
%\tableofcontents

\noindent \textbf{Keywords}: Nonlocal shape optimization, Nonexistence of minimizers, Isoperimetric profile, Repulsive Coulombic energy.

\smallskip

\noindent \textbf{MSC(2020)}: 35Q40, 49Q10, 49J40.

\section{Introduction}\label{intro}
Consider the energy functional 
	\begin{equation}
		\label{eq:Eq}
		E_q(\Omega):=\inf_{u\in H^1_0(\Omega)} \left\{E_q(u,\Omega) \, : \, \int_\Omega u^2(x) dx = 1 \right\},    
	\end{equation}
    where
    \[
E_q(u,\Omega):= \int_{\Omega}|\nabla
		u(x)|^2 \, dx +{\frac{q}{2}}D(u^2,u^2)\qquad\text{and}\qquad   D(u^2,u^2):=  \int_{\R^3}\int_{\R^3}\frac{u^2(x)
			\, u^2(y)}{|x-y|}\,dx\,dy
    \]
    is the Coulomb energy of $u^2$, $q > 0$ is a fixed parameter, which we refer to as the \emph{charge}, and $\Omega \subset \mathbb R^3$ is an open set. We are interested in the shape optimization problem
    \begin{equation}\label{eq: mainminprob}
\min\left\{E_q(\Omega)\,:\,|\Omega|=|B_1| \right\},
    \end{equation}
    where $|\Omega|$ is the Lebesgue measure in $\R^3$ of $\Omega$ and $B_1$ is a generic unit ball.

If $q=0$  then $E_q$ is the first eigenvalue of the Dirichlet Laplacian and the related shape optimization problem \eqref{eq: mainminprob} leads to the Faber-Krahn inequality, stating that the first eigenvalue of the Dirichlet Laplacian is uniquely minimized by balls among sets of prescribed measure. In this sense, the Dirichlet energy appearing as the first term in the energy $E_q(\Omega)$ acts, once paired with the volume constraint, as a cohesive term. 
By contrast, in the limit case $q=+\infty$ the energy behaves as the reciprocal of the potential capacity, see \cite{landkof1972}, and acts as a scattering term. This competition suggests how the magnitude of the charge $q$ plays a role in order to establish well posedness -or lack thereof- for the problem.

The energy functional in \eqref{eq:Eq} derives from a famous model introduced by Hartree in 1928, \cite{hartree}, in quantum mechanics. The original model was formulated in the whole space $\R^3$ to describe the behavior of electrons and underwent several transformations, leading in particular to the restricted Hartree equation, 
\[
-\Delta u+Vu+\Phi[u^2]u=\varepsilon_q u
\]
with $u:\R^3\to\R$. Here $V$ is an external potential that usually acts as a confining term, while
\[
\Phi[u^2](x)=u^2\star\frac{1}{|\cdot|}(x)=\int_{\R^3}\frac{u^2(x-y)}{|y|}\,dy
\]
is the potential of $u^2$ in $\R^3$. We refer to \cite{lions} for a precise mathematical treatment of this problem in the whole space $\R^3$.

We note that problem \eqref{eq: mainminprob} can be restated, by a simple scaling argument (see Lemma \ref{le:equivalence}) as follows: 
\begin{equation}\label{eq:mainnonex}
		\min\left\{ \mathcal F_m(\Omega)\,:\, \Omega\subset
		\R^3,\;\text{open, with }|\Omega|=|B_1|\, \right\} 
	\end{equation}
	where
	\begin{equation}
		\label{eq:Fm}
		\mathcal F_m(\Omega)=\min_{ u\in
			H^1_0(\Omega)}\left\{\int_\Omega|\nabla u|^2 dx +\frac12
		D( u^2, u^2)\,:\, 
		\int_\Omega u^2 dx =m\right\}=\min_{ u\in
			H^1_0(\Omega)}\left\{E_1(u,\Omega)\,:\, 
		\int_\Omega u^2 dx =m\right\}.
	\end{equation} 
In particular we have that
\[
E_q(\Omega)=q^{-1}\F_q(\Omega).
\]
The mathematical advantage of the latter formulation is that we can fix the functional by tuning the $L^2$ mass of the ground states of each set. This also underlines the capacitary nature of the energy for $m$ large: formally neglecting the Dirichlet term, given a set $\Omega$, we ask for the maximal amount of charge $m$ that such a set can contain.  

Notice also  that our energy is exactly the Hartree reduced energy where the confining potential $V$ is replaced by an abrupt screening one, that is, where we formally impose 
\[
V=+\infty\,\chi_{\R^3\setminus \Omega}=
\begin{cases}
    +\infty\qquad&\text{ on $\R^3\setminus\Omega$},\\
    0\qquad&\text{ on $\Omega$}.
\end{cases}
\]
Let us note that these kinds of potentials are rigorously introduced in the shape optimization context as capacitary measures; moreover  such \emph{bang-bang} screening potentials naturally appear as limiting cases in optimization for Schr\"odinger operators, see for instance \cite[Chapter 9]{hen17} and the references therein.

In our setting, the  shape optimization problem \eqref{eq: mainminprob} was introduced by the first and the third author together with C. B. Muratov in \cite{MazzoleniMuratovRuffini} in order to describe the optimal shape of a superconducting island $\Omega$ embedded into an insulator. In particular, the optimal set $\Omega$ in \eqref{eq:mainnonex} (or \eqref{eq: mainminprob}) is the optimal shape that an insulator may assume in order to maximize the number of (super-)charges in it. 
We refer to \cite{MazzoleniMuratovRuffini} and the references therein for a more precise introduction of the physical interpretation and background of such a model. 

In addition to  its physical interpretation, from a variational point of view the energy \eqref{eq:Eq} is a natural counterpart of the famous Gamow liquid drop model, where the shape energy up to renormalization of the physical constants  is given by
\[
G(\Omega)=\int_{\R^3}|\nabla \chi_\Omega|\,dx+\eps\int_{\R^3}\int_{\R^3}\frac{\chi_\Omega(x)\chi_\Omega(y)}{|x-y|}\,dxdy
%=P(\Omega)+\varepsilon\int_\Omega\int_\Omega\frac{dx\,dy}{|x-y|}
=P(\Omega)+\varepsilon D(\chi_\Omega,\chi_\Omega), \qquad \Omega\subset\R^3, \,\,|\Omega|=1,
\]
where $P(\Omega)$ is the distributional (De Giorgi) perimeter of $\Omega$, see \cite{Maggibook} for definitions and properties on that.
In fact, the two functionals share the same competition of a cohesive term and a repulsive one. In particular it was first shown in \cite{KM1,KM2} that if $\varepsilon\ll1$ then balls are the unique volume-constrained minimizers of $G$, while if $\eps\gg 1$ no minimizer exists. More precisely, for the Gamow energy it has been conjectured in \cite{ChoksiPeletier} that there exists a unique threshold $\varepsilon^*>0$ such that for $\varepsilon\le\varepsilon^* $ the unit ball is the unique minimizer while for $\varepsilon>\varepsilon^*$ no minimizer is attained. Although this conjecture is still open, there are some partial results  in its support. We refer to the introduction of the recent paper \cite{BMP25} for a clear and exhaustive state of the art on such a conjecture.

Quite interestingly, the existence theory for these problems is somewhat better understood than the nonexistence theory. The general idea is that whenever the cohesive term enjoys strong enough stability properties, the ball, provided it is a critical point, must be a \emph{local minimizer}, in suitably chosen topologies. Hence the problem reduces to show that any minimizer is in fact a suitable small perturbation of the ball. Such a strategy was employed in \cite{MazzoleniMuratovRuffini}, where it is shown that for small values of $q$, minimizer exists for \eqref{eq: mainminprob}; they are $C^{2,\alpha}-$regular sets and they approximate a ball as $q\to0$.  

We now focus on the nonexistence issue, which was left unsolved in~\cite[Conjecture 6.1]{MazzoleniMuratovRuffini}.

A standard strategy to obtain such nonexistence results is to show that the isoperimetric profile of the energy is strictly concave with respect to a leading parameter. This implies that the functional favors scattering of the mass and this leads to nonexistence. For perturbations of the perimeter, as in the Gamow energy, a standard technique is to reason by contradiction and suitably split any minimizer so that the gain in energy of the repulsive part is higher than the loss of the cohesive one, see for instance \cite{FFMMM15,FKN16,FN21,FNV18_CPAM}.  

This article deals precisely with the nonexistence  for the screened Hartree functional \eqref{eq: mainminprob}.
Namely, our main result is the following.

\begin{theorem}\label{thm:maintheorem}
			There exists a universal constant $\overline q>0$ such that if $q>\overline q$, \eqref{eq: mainminprob} has no solution.
\end{theorem}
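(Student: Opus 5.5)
By the scaling of Lemma \ref{le:equivalence}, it suffices to show that \eqref{eq:mainnonex} has no minimizer for $m$ large. Throughout I will use the profile $f(m):=\inf\{\mathcal F_m(\Omega):|\Omega|=|B_1|\}$, together with its unconstrained-volume analogue. The plan follows the Lu--Otto anti-concentration strategy.

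\textbf{Step 1: a priori properties of a minimizer.} Suppose $\Omega$ is a minimizer with ground state $u$, $\int u^2=m$. I will derive the Euler--Lagrange equation
\[
-\Delta u+\Phi[u^2]u=\mu u \quad\text{in }\Omega .
\]
I will also derive the shape optimality condition: $|\nabla u|^2$ is constant on $\partial\Omega$, at least in a weak or viscosity sense via inner variations. Testing the equation with $u$ gives $\mu m=\int|\nabla u|^2+D(u^2,u^2)$.

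\textbf{Step 2: sharp energy bounds.} For the upper bound I will use competitors made of many disjoint small balls placed far apart. Splitting the volume $|B_1|$ into $N$ balls, each carrying mass $m/N$, gives a trade-off between Dirichlet energy $\sim N^{2/3}m$ and self-interaction $\sim m^2 N^{-5/3}$, with the cross terms negligible when the balls are far apart. Optimizing in $N$ gives $f(m)\lesssim m^{a}$ with $a<2$. In particular $f$ is strictly sub-additive in the sense that splitting off pieces is favorable.

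For the lower bound, the Coulomb term of a charge $m$ confined to volume $|B_1|$ is at least of order $m^2$ whenever a fixed fraction of the mass sits in a bounded region. This is where anti-concentration enters.

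\textbf{Step 3: anti-concentration (main obstacle).} I want to show that for large $m$ a minimizer cannot exist, because some fraction of its mass could be moved far away with a net energy gain. I will take a ball $B_r(x_0)$ and cut $\Omega$ with a cutoff across an annulus chosen by averaging, in the IMS localization style used by Lu--Otto. The localization error $\int|\nabla\eta|^2u^2$ is controlled by the averaging. Translating the outer piece to infinity removes the mutual Coulomb interaction between the inner and outer parts, of order $m_{\rm in}m_{\rm out}/\mathrm{diam}$.

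The volume constraint is preserved because I only split $\Omega$ into $\Omega\cap B_r$ and $\Omega\setminus \overline{B_{r}}$, each portion keeping its measure. The mass lost in the transition region, however, must be reabsorbed by rescaling. The difficulty is to guarantee that the interaction gain beats the localization error and the renormalization cost. Concretely, I need a radius at which a nontrivial fraction of the mass lies on each side while the annulus carries little mass and gradient. I will get this from the energy bounds of Step 2. These bounds force $u^2$ to have small density: each point satisfies $\int_{B_1(x)}u^2\lesssim m^{\theta}$ with $\theta<1$, since otherwise the self-energy would exceed the upper bound on $f(m)$. Hence the mass is spread over many unit cells.

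A pigeonhole argument over dyadic radii, or over half-spaces (cutting with a plane rather than a sphere, which is cleaner), then produces a cut hyperplane whose slab of width $1$ has mass $\lesssim m^{\theta}$ and whose two sides each carry at least $cm$. The interaction removed between the two sides is at least $c m^2/\mathrm{diam}$. I will bound the diameter, or replace it with a quantitative estimate of the mean interaction distance, using the fact that $\Omega$ has unit volume and the density bound. The cost is $O(m^{\theta}\mu)$, with $\mu\lesssim f(m)/m$.

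\textbf{Step 4: conclusion.} I will compare the exponents to show that the gain exceeds the cost for $m\ge\overline m$. This contradicts minimality, since separating the two halves to infinite distance gives a competitor of the same volume with strictly lower energy. Translating $\overline m$ back through Lemma \ref{le:equivalence} yields $\overline q$.
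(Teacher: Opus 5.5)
Your Step~3 breaks down exactly where you locate the main obstacle. With your estimates, the cost of one cut is scale-free: $\lesssim \sqrt m\cdot m^{\theta}=m^{5/4}$, where $\theta=3/4$ comes from $D(u^2,u^2)\lesssim m^{3/2}$. The gain is $\int_A\int_B u^2(z)u^2(y)|z-y|^{-1}\,dz\,dy\gtrsim m^2/\mathrm{diam}$. So you would need $\mathrm{diam}\ll m^{3/4}$, and nothing provides this. The constraint $|\Omega|=|B_1|$ allows arbitrarily many small components at arbitrary mutual distances; the competitor of Appendix~\ref{app2} is of exactly this type. The density bound $\int_{B_1(x)}u^2\lesssim m^{3/4}$ only forces the mass to spread out, so it bounds the extent from below, not from above. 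Hence, for the single slab your pigeonhole selects, the interaction across it may be negligible compared with its cost.

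The missing idea is to compare cost and gain at the same length scale. The paper, following Lu--Otto, keeps only the far mass inside $Q_R(x)$, so that $|z-y|\le \mathrm{diam}(Q)R$. This gives the discrete ODE \eqref{eq:V}, which bounds the layer mass from below by $\frac{\gamma}{(t^{-2}+\sqrt m)R}V(x,r)\bigl(V(x,R)-V(x,r+t)\bigr)$. Iterating over dyadic scales yields $V(x,R)\ge C_0\sqrt m\Rightarrow V(x,2R)\ge m/2$. A covering argument at the smallest radius where some $Q_r(x)$ carries $C_0m^{4/5}$ then closes the argument. With half-spaces one could instead integrate the one-slab inequality over all positions and directions of the slab, using $\int_{\R}\int_{\{s\le z\cdot e\le s+1\}}u^2\,dz\,ds=m$, together with the thin-layer analysis below. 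Either way, some argument exploiting many cuts at once is indispensable; a single pigeonholed cut cannot work.

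Second, the pigeonhole claim itself does not follow from the density bound. You want a unit slab or annulus with mass $\lesssim m^\theta$ and mass $\ge cm$ on each side. But the $\gtrsim m^{1/4}$ unit cells carrying the mass can all lie in one unit-thick planar sheet, or one thin spherical shell, of size $\gtrsim\sqrt m$; this is compatible with $D(u^2,u^2)\lesssim m^{3/2}$. Then every parallel slab, or every concentric annulus, either contains most of the mass or leaves almost nothing on one side. The renormalization is also lost, since one needs layer mass $\le\frac{9}{10}m$ to get $\overline C^2\le 1+\frac{10}{m}\int_{\mathrm{layer}}u^2$.

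This thin-layer scenario is precisely alternative \emph{ii)} of Theorem~\ref{thm:nomin}, and your proposal never treats it. The paper excludes it by running the argument for three cubes rotated as in Lemma~\ref{le: existence rotation}. By Proposition~\ref{prop:rotation}, the three layers then meet in boundedly many components of bounded diameter, one of which carries a fixed fraction of $m$. This forces $D(u^2,u^2)\gtrsim m^2$, against the bound $\widetilde Cm^{3/2}$.

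Finally, the cut as you describe it is inconsistent. An IMS partition $\eta_1^2+\eta_2^2=1$ has overlapping supports, so translating the outer piece adds $|\Omega\cap\{\eta_1\eta_2\neq 0\}|$ to the volume. You need disjointly supported cutoffs. Then the cross terms $2\int f_iu\nabla f_i\cdot\nabla u$ must be controlled by a Caccioppoli inequality in the layer: test $-\Delta u=\beta u$ with $\xi^2u$ and use $\beta\le C\sqrt m$. This is what actually justifies your cost $O(\sqrt m\cdot\text{layer mass})$. Also, the total self-interaction of $N$ balls is $\sim m^2N^{-2/3}$, not $m^2N^{-5/3}$, which gives $f(m)\lesssim m^{3/2}$.
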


The proof is inspired by an elegant argument proposed in~\cite{LuOtto} to show nonexistence, for $m$ large, of ground states of the Thomas-Fermi-Dirac-von Weizs\"acker (TFDW) energy with prescribed mass $m$, namely
\begin{equation}\label{eq:luotto}
    \inf\left\{ \int_{\R^3}\left(|\nabla u|^2+u^{10/3}-u^{8/3}\right)\,dx+\frac12 D(u^2,u^2)\,:\, \int_{\R^3} u^2\,dx=m \right\}.
\end{equation}
To put our proof into perspective, and in order to clarify it, it is worth outlining the key points of the proof for such functional.
\begin{itemize}
\item[-] First, build a suitable competitor by means of a partition of unity in order to split a minimizer $u$ into more components and send such components of $u$ far  apart from each other. This replaces the abrupt cutting technique of the perimeter case. 
\item[-] Then test the minimality of the functional against such a competitor and perform the gain versus loss analysis with the following ideas:
\begin{enumerate}
\item the gradient penalization is localized in the overlapping region of the partition of unity.
\item the Coulombic gain is governed by the nonlocal interaction with the distant mass. 
\end{enumerate}
\item[-] Hence deduce an energy inequality that relates the local $L^2$-mass (situated near the splitting locus) to the mass distributed in the far-field. 
\item[-] Next, Lu and Otto show via a sort of discrete-ODE argument that the existence of a minimizer would require a specific concentration principle: if a small amount of mass is present locally, the ratio between the local and distant $L^2$-norms must be large, thus the mass must accumulate. 
\item[-] The argument ends noting that such a  concentration phenomenon, however, is energetically hostile in the large-mass regime, where scattering is expected to predominate, thereby leading to the nonexistence of a global minimizer.
\end{itemize}
In our context  this strategy fails, as it is, for two reasons: first and foremost, it requires the function $u$ to be supported -in principle- in the whole space, so that there is no penalization in terms of the support. Second, the kinetic penalization in \eqref{eq:luotto},
\begin{equation}\label{eq:kinetic}
\int_{\R^3}u^{10/3}-u^{8/3}\,dx,
\end{equation}
forces any ground state $u$ to be uniformly bounded in $L^\infty$. This imposes the mass of $u$ to spread and plays a crucial role in the final contradiction argument by Lu and Otto.

In our setting the measure constraint imposed on the support of optimizers prevents both these simplifications (even adding the kinetic terms~\eqref{eq:kinetic}), leading to a quite different proof where the analytic reasoning must be implemented by a purely geometric argument.

We summarize here the main additional difficulties and differences.
\begin{itemize}
\item[-] A standard partition of unity cannot be directly applied to construct a valid competitor, as it would artificially enlarge the measure of the test function's support. Such a term might be properly bounded \emph{only} by knowing its geometric features quite precisely, which appears impossible a priori. Consequently, we are forced to separate the components of the putative minimizer using strictly nonoverlapping cut-off functions. This ``hard'' separation induces a critical loss of information regarding the $H^1$-mass located within the separation layer. While this missing mass has a negligible impact on the nonlocal Coulomb interaction, it severely compromises the local gradient estimates, which no longer benefit from the properties guaranteed by a standard partition of unity.
\item[-] To overcome this issue, we must deeply refine our gradient bounds. The key idea relies on establishing a Caccioppoli-type estimate inside the layer (i.e. the zone ``forgotten'' by our cut-off functions), which allows us to bound the Dirichlet energy across the separation layer despite the missing information. 
\item[-] Nevertheless, both gradient and Coulomb estimates lead to a successful contradiction only if the amount of mass trapped in the separation layer is sufficiently small; otherwise, an excessive loss of $L^2$-mass in this region would leave us without any kind of usable information. 
\item[-] This analysis leads to a dichotomy: either we reach the desired contradiction, or almost all of the mass concentrates in a thin layer far away from the origin (we can actually bound from below its distance from the origin by $\sqrt m$, as $m\gg1$). This is the content of Theorem \ref{thm:nomin}, which contains the technical core of the proof of Theorem \ref{thm:maintheorem}.  

\vspace{.2cm}

From a physically flavored heuristic, we are saying that if the charge $m$ is very large, then existence can occur only if such a charge disposes itself in a thin layer far from the origin, which escapes to infinity as $m$ diverges. This is not sufficient to conclude from a purely energetic argument, as in \cite{LuOtto}.

\vspace{.2cm}

\item[-] Instead, to rigorously rule out such an anomalous mass concentration on the splitting zone (the thin layer), we introduce a novel geometric argument. Instead of the spherical splitting employed in the framework of \cite{LuOtto}, we perform the separation using cut-off functions supported on a cube and its complement\footnote{Any convex body not too symmetric works as well, but the cube is the easiest one to be exploited.}. By exploiting the lack of rotational invariance of the cube, we repeat the concentration argument on three different cubes, with different orientations. By a  geometric argument, performed in Appendix \ref{approt2}, we show that it is possible to choose the cubes so that much of the mass must be concentrated on \emph{uniformly bounded} subsets obtained as intersections of the three selected cubes. This leads to an excess of Coulomb energy, violating the upper bounds of the functional which we can deduce by an explicit competitor (a mist of droplets escaping to infinity).
\end{itemize}

\begin{remark}
Albeit the above strategy is applied directly to problem~\eqref{eq:Eq}, we stress that it is a general anti-concentration method, which can be employed for a large class of spectral problems where a double constraint on the mass and on the support is present. 
\end{remark}

\vspace{.5cm}
By Theorem \ref{thm:maintheorem} we know that $\mathcal F_m$ does not admit volume constrained minimizers. Nevertheless by a comparison argument, which we perform in Appendix \ref{app2}, it is possible to compute an explicit asymptotic upper bound for the energy, depending on $m$. A second result of this paper is to show that such an upper bound is optimal,  up to multiplicative constant. Precisely we give the following estimate on  the isoperimetric profile of the energy as $m$ diverges, that is, letting

\[
\mathcal I(m)=\inf \left\{\mathcal F_m(\Omega)\,:\, \Omega\subset \R^3,\;\text{open, with }|\Omega|=|B_1|\right\},
\]
we show the following theorem.
\begin{theorem}\label{thm:asymptotic profile}
  There exists a  universal constant  $C\ge1$ such that if $m\ge 1$ then
\begin{equation}\label{eq:isoprofile}
\frac{m^\frac32}{C}\le \mathcal I(m)\le Cm^\frac32,
\end{equation}
in particular, \[
\frac{m^\frac12}{C}\le \inf\{E_m(\Omega) \,:\, |\Omega|=|B_1|\}\le Cm^\frac12.
\]
\end{theorem}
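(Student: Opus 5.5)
The plan is to prove the two inequalities in \eqref{eq:isoprofile} separately; the statement about $E_m$ then follows from $E_q(\Omega)=q^{-1}\F_q(\Omega)$ (Lemma \ref{le:equivalence}).

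For the upper bound I would use the explicit competitor announced in the introduction (Appendix \ref{app2}): a mist of $N$ disjoint balls $B_r(x_i)$ with $N r^3=1$, placed mutually far apart. On each ball I put $u_i=\sqrt{m/N}\,\phi_r$, where $\phi_r$ is the $L^2$-normalized first Dirichlet eigenfunction. The Dirichlet term totals $m\lambda_1(B_1)r^{-2}$. The self-Coulomb term of each droplet is $\sim (m/N)^2 r^{-1}$, so summed over the $N$ droplets it gives $\sim m^2 N^{-1}r^{-1}=m^2r^2$. The cross interactions vanish as the centers are sent to infinity. Hence
\[
\mathcal I(m)\lesssim m r^{-2}+m^2 r^2 .
\]
Optimizing in $r$ gives $r\sim m^{-1/4}$, and therefore $\mathcal I(m)\lesssim m^{3/2}$. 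Here $N=r^{-3}\sim m^{3/4}$ must be an integer, which only costs constants when $m\ge1$. Strictly, the competitor is an infimum over sets whose balls escape to infinity; for any fixed large separation the error is small, which is enough.

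For the lower bound, fix $\Omega$ with $|\Omega|=|B_1|$ and an admissible $u$ with $\int u^2=m$. The key step is a local Coulomb lower bound. Partition $\R^3$ into cubes $Q_j$ of side $\ell$ and let $m_j=\int_{Q_j}u^2$. Since $|x-y|\le\sqrt3\,\ell$ on $Q_j\times Q_j$, we get
\[
D(u^2,u^2)\ge \sum_j D(u^2\chi_{Q_j},u^2\chi_{Q_j})\ge \frac{1}{\sqrt3\,\ell}\sum_j m_j^2 .
\]
To control $\sum_j m_j^2$ from below I would use the Dirichlet term and the volume constraint. By H\"older and Sobolev,
\[
m=\int_\Omega u^2\le |\Omega|^{2/3}\|u\|_{L^6}^2\lesssim \|\nabla u\|_2^2 ,
\]
so $T:=\|\nabla u\|_2^2\gtrsim m$, but this alone is too weak.

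A cleaner route is to combine Hardy--Littlewood--Sobolev type reasoning with the support constraint. By Cauchy--Schwarz, $m=\sum_j m_j\le (\#\{j:m_j>0\})^{1/2}(\sum_j m_j^2)^{1/2}$. The number of cubes meeting $\Omega$ is not controlled by $|\Omega|$, so I instead split the mass: cubes where $m_j\ge \delta\,\ell^3 m$ versus the others. On a cube, local Sobolev with the measure constraint gives
\[
m_j\le |\Omega\cap Q_j|^{2/3}\|u\|_{L^6(Q_j)}^2 .
\]
Hence the cubes where $|\Omega\cap Q_j|$ is small carry little mass relative to the local $L^6$ norm, while there are at most $|B_1|/(\text{threshold})$ cubes where it is large. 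Concretely, choose the threshold $|\Omega\cap Q_j|\ge\eta$. The bad cubes contribute at most $\eta^{2/3}\sum_j\|u\|_{L^6(Q_j)}^2\lesssim\eta^{2/3}T$ mass, using the $\ell^3\subset\ell^1$ style summation $\sum_j\|u\|_{6,Q_j}^2\le \ldots$. That summation is not directly valid; I would instead use the localized Sobolev inequality on $\Omega\cap Q_j$ with gradient plus $\ell^{-2}$ mass terms, which sums fine. The good cubes number at most $|B_1|/\eta$ and carry the rest of the mass $m'$, so $\sum m_j^2\ge \eta m'^2$.

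Altogether this gives
\[
\F\gtrsim T+\frac{\eta m'^2}{\ell},\qquad m-m'\lesssim \eta^{2/3}\bigl(T+\ell^{-2}m\bigr).
\]
Choosing $\ell\sim1$, and either $T\gtrsim m^{3/2}$ directly or else $T\ll m^{3/2}$, we take $\eta\sim m^{-3/4}$ so that $m'\ge m/2$, yielding $\F\gtrsim m^{-3/4}m^2=m^{5/4}$. This is not yet sharp, so the parameters must be tuned: taking $\ell$ free, with $\eta\sim\ell^3$ (volume fraction), gives $\F\gtrsim T+m^2\ell^2$ together with the requirement $m\lesssim\ell^2 T$ for the bad part. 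Then either $T\gtrsim m\ell^{-2}$, or most of the mass lies in good cubes. Balancing $m\ell^{-2}=m^2\ell^2$ gives $\ell=m^{-1/4}$ and $\F\gtrsim m^{3/2}$, matching the upper bound.

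The main obstacle I expect is exactly this localized Sobolev bookkeeping. One must make the ``bad cubes carry little mass'' estimate rigorous, uniformly in the set $\Omega$, with the correct powers of $\ell$. This is where the volume constraint replaces the $L^\infty$ bound of \cite{LuOtto}.
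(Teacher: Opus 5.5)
Your proposal is correct and follows essentially the paper's route. The upper bound uses a mist of $N\sim m^{3/4}$ far-apart balls. For the lower bound, you take cubes of side $\ell=m^{-1/4}$ and split them by the volume fraction of $\Omega$. The well-filled cubes, whose number is at most $C|B_1|\ell^{-3}$, are controlled by the Coulomb term via Cauchy--Schwarz. The nearly empty cubes are controlled by a Poincar\'e-type bound $\int_{Q_j}u^2\lesssim \ell^2\int_{Q_j}|\nabla u|^2$, and the two alternatives give $m^2\ell^2=m^{3/2}$ and $m\ell^{-2}=m^{3/2}$ respectively.

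The differences are minor. You bound each cube's self-interaction by its diameter instead of its capacity $I(Q)/\rho$. For the nearly empty cubes you use H\"older plus the scaled Sobolev inequality with absorption; this needs the threshold $\eta=c\ell^3$ with $c$ small compared with the Sobolev constant of the cube. The paper instead uses Poincar\'e--Wirtinger with threshold $\frac12|Q_\rho|$ (Appendix~\ref{app:poinc}). The ``obstacle'' you flag therefore closes exactly as you sketched, once $c$ is chosen small. Your earlier attempts with $\ell\sim 1$, and the invalid $\ell^3\subset\ell^1$ summation, should simply be discarded.
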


Let us spend a few words on the proof of this result as it is due to a quite simple idea. As mentioned above, the upper bound in \eqref{eq:isoprofile} is easier: one just tests the energy on a disjoint union of equal balls escaping to infinity, and then optimizes in terms of the radius of such balls. This is done in Appendix~\ref{app2}. 

The lower bound is a little bit less direct: assuming that $u$ is an optimal function for $\F_m(\Omega)$, we have that
\[
\F_m(\Omega)=\int_\Omega |\nabla u|^2+\frac{1}{2}D(u^2,u^2)=m\int_\Omega |\nabla (u/\sqrt{m})|^2+\frac{m^2}{2}D((u/\sqrt{m})^2,(u/\sqrt{m})^2)\ge m\lambda_1(\Omega)+\frac{m^2}{2\rm cap(\Omega)},
\]
where $\lambda_1(\Omega)$ is the first Dirichlet eigenvalue of $\Omega$ while ${\rm cap}(\Omega)$ is its potential capacity, see, e.g., \cite{landkof1972}. 
So, by the arithmetic-geometric inequality, one obtains 
\[
\F_m(\Omega)\ge \sqrt{8}m^{3/2} \sqrt{\frac{\lambda_1(\Omega)}{{\rm cap}(\Omega)}}.
\]
This suggests that the lower bound is sharp, up to a multiplicative constant, but unfortunately the argument is not conclusive as the shape functional $\Omega\mapsto \frac{\lambda_1(\Omega)}{{\rm cap}(\Omega)}$ is not bounded from below by a strictly positive constant even if the volume of $\Omega$ is fixed. One counterexample is a ball attached to a thin tentacle of length $L$, for which the energy of $\lambda_1(\Omega){\rm cap}(\Omega)^{-1} $ is proportional to $L^{-1}$, converging to $0$ as $L\to+\infty$. Nonetheless, by means of a heuristic argument one can see that the optimal thickness (i.e. the diameter of its section) of the tentacle should be of order $m^{-1/4}$. Hence,  to obtain a formal proof we reason as follows: we divide the space into cubes of side-length $m^{-1/4}$ and select those \emph{quite filled}, where the capacitary term $D(\cdot,\cdot)$ is more relevant, and those \emph{quite empty}, where the spectral term given by the Dirichlet energy is expected to be large, as there is enough boundary of $\Omega$ to support a Poincar\'e type inequality. Computing separately these two cases leads to the desired lower bound in \eqref{eq:isoprofile}.

\subsection{Organization of the paper}

The paper is organized as follows. Section \ref{sec:nonexistence} is  devoted to the proof of Theorem \ref{thm:maintheorem}. As explained above, a main part of it is the proof of Theorem \ref{thm:nomin}, which covers much of the section. Section \ref{sec:asymptotic} is devoted to the lower bound for the proof of Theorem \ref{thm:asymptotic profile}. The upper bound is stated and proved in Appendix \ref{app2}, which is useful also for the nonexistence result in Theorem \ref{thm:maintheorem}. In Appendix \ref{approt2} we show the existence of (many) good rotations in order to minimize the mutual intersection of thin  cube-layers. Finally, Appendix~\ref{app:poinc} we prove a Poncar\'e-type inequality needed in the proof of Theorem~\ref{thm:asymptotic profile}.

\section{Proof of Theorem~\ref{thm:nomin}}\label{sec:nonexistence}

In this section we show that problem \eqref{eq:mainnonex} does not
admit minimizers for $m$ large enough, namely we prove Theorem~\ref{thm:maintheorem}.   
        Throughout the paper, we tacitly extend any function $u\in H^1_0(\Omega)$ by zero outside $\Omega$ and we adopt the following notation: for
       $\Omega \subset \R^3$ an open set of finite measure and
        $u\in H^1_0(\Omega)$ denoting by $\star$ the usual convolution, we let
	\begin{equation}\label{eq:defvuhu}
		\Phi[u^2](x)={ u^2 \star \frac{1}{|\cdot|}(x) } =\int_{\R^3}\frac{u^2(y)}{|x-y|}\,dy=\int_{\Omega}\frac{u^2(y)}{|x-y|}\,dy.
              \end{equation}
Notice that $\Phi[u^2] \in W^{2,3}_\mathrm{loc}(\R^3)$ by \cite[Theorem 9.9]{GilbargTrudinger} and Sobolev embedding.  We begin with some simple remarks.
	\begin{proposition}\label{prop:existence optimal u}
          Let $\Omega\subset \R^3$ be an open set of finite measure and
          let $m>0$.  Then the minimization problem \eqref{eq:Fm}	admits a solution with constant sign (nonnegative, without loss of generality). 
	\end{proposition}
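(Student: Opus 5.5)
The plan is to apply the direct method in the calculus of variations on the admissible class
\[
\mathcal A_m:=\Big\{u\in H^1_0(\Omega)\,:\,\int_\Omega u^2\,dx=m\Big\}.
\]
The class $\mathcal A_m$ is nonempty (take any nonzero element of $H^1_0(\Omega)$ and rescale it), and the functional $E_1(\cdot,\Omega)$ is nonnegative, so the infimum $\mathcal F_m(\Omega)$ is finite. Take a minimizing sequence $(u_n)\subset\mathcal A_m$. Since
\[
\int_\Omega|\nabla u_n|^2\le E_1(u_n,\Omega)
\]
and the $L^2$ norms are fixed, $(u_n)$ is bounded in $H^1_0(\Omega)$. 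Up to a subsequence, $u_n\rightharpoonup u$ weakly in $H^1_0(\Omega)$, and by weak lower semicontinuity of the norm,
\[
\int|\nabla u|^2\le\liminf_n\int|\nabla u_n|^2.
\]

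The key step, and the only genuinely delicate one, is to show that the mass constraint survives in the limit, since $\Omega$ is merely open of finite measure and need not be bounded. I would use the compactness of the embedding $H^1_0(\Omega)\hookrightarrow L^2(\Omega)$ for open sets of finite measure. To see this directly, extend by zero to $H^1(\R^3)$. Rellich gives local strong convergence on every ball $B_R$. For the tails, Hölder and Sobolev give
\[
\int_{\Omega\setminus B_R}u_n^2\le|\Omega\setminus B_R|^{2/3}\,\|u_n\|_{L^6}^2\le C\,|\Omega\setminus B_R|^{2/3}\,\sup_n\|\nabla u_n\|_2^2,
\]
which tends to $0$ uniformly in $n$ as $R\to\infty$. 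Hence $u_n\to u$ in $L^2$, so $\int u^2=m$, and $u\in\mathcal A_m$.

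For the Coulomb term, I would use strong $L^2$ convergence together with the uniform $H^1$ bound. By interpolation $u_n\to u$ in $L^p$ for every $p\in[2,6)$, and in particular $u_n^2\to u^2$ in $L^{6/5}(\R^3)$. The Hardy–Littlewood–Sobolev inequality gives
\[
D(f,f)\le C\|f\|_{L^{6/5}}^2,
\]
and $D$ is a continuous bilinear form on $L^{6/5}$, so $D(u_n^2,u_n^2)\to D(u^2,u^2)$. Alternatively, Fatou's lemma on an a.e.\ converging subsequence already gives lower semicontinuity. Either way,
\[
E_1(u,\Omega)\le\liminf_n E_1(u_n,\Omega)=\mathcal F_m(\Omega),
\]
so $u$ is a minimizer.

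Finally, for the sign I would replace $u$ by $|u|$. We have $|u|\in H^1_0(\Omega)$ with $|\nabla|u||=|\nabla u|$ a.e., $\int|u|^2=m$, and $D(|u|^2,|u|^2)=D(u^2,u^2)$. Hence $E_1(|u|,\Omega)=E_1(u,\Omega)$, so $|u|$ is a nonnegative minimizer. This shows that a minimizer of constant sign exists, which is the claim. The main obstacle is the compactness step for unbounded $\Omega$, which the uniform tail estimate above handles.
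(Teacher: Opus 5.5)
Your proposal is correct and follows the same route as the paper. The paper's argument is the direct method with a bounded minimizing sequence, weak $H^1_0$ and strong $L^2$ convergence, weak lower semicontinuity of the Dirichlet term, Fatou for the Coulomb term, and $|u|$ for the sign. The only difference is that you prove the compactness of $H^1_0(\Omega)\hookrightarrow L^2(\Omega)$ for unbounded finite-measure $\Omega$ directly (Rellich on balls plus the uniform H\"older--Sobolev tail bound) instead of citing Bucur--Buttazzo, and you note that Hardy--Littlewood--Sobolev even gives continuity, not just lower semicontinuity, of the Coulomb term.
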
	%
	\begin{proof}
		Let $(u_n)_n$  be a minimizing sequence for the energy. Since all the terms in the definition of $E_1(u,\Omega)$ are positive, we infer that $(u_n)_n$ is bounded in $H^1_0(\Omega)$. Then up to passing to a subsequence, $(u_n)_n$ converges weakly in $H^1_0(\Omega)$ and strongly in $L^2(\Omega)$ to some function $u\in H^1_0(\Omega)$. In particular the $L^2-$convergence implies
		that $\|u\|_{L^2(\Omega)}=m$.  Observe that in the case of finite measure unbounded set, the Sobolev embedding holds by \cite{BucurButtazzo}. By lower semicontinuity with respect to the weak convergence, we have that
		\[
		\int |\nabla u|^2\,dx\le \liminf_{n\to+\infty} \int |\nabla u_n|^2\,dx
		\]
		and, by Fatou's Lemma 
		\[
D(u^2,u^2)=\iint_{\Omega\times\Omega}\frac{u^2(x)u^2(y)}{|x-y|}\,dxdy\le \liminf_{n\to+\infty} D(u_n^2,u_n^2).
		\]
		Hence, the energy is lower semicontinuous
		and so $u$ is a minimizer.
		The fact that $u$ can be chosen of constant sign follows since $u$ and $|u|$ yield the same energy value.
              \end{proof}

\begin{lemma}\label{le:equivalence}
	Let $q=m>0$. Then problems~\eqref{eq: mainminprob} and \eqref{eq:mainnonex} are equivalent.	
\end{lemma}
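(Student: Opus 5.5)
The plan is to exhibit an explicit bijection between admissible functions of the two problems that multiplies the energy by the positive constant $q$ and preserves the domain $\Omega$. It then follows that the two problems have the same minimizers, and in particular the identity $E_q(\Omega)=q^{-1}\F_q(\Omega)$ holds.

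Fix an open set $\Omega$ with $|\Omega|=|B_1|$. For $u\in H^1_0(\Omega)$ with $\int_\Omega u^2=1$, set $v:=\sqrt{q}\,u$, so that $v\in H^1_0(\Omega)$ and $\int_\Omega v^2=q=m$. The map $u\mapsto \sqrt q\,u$ is a bijection between the two constraint sets $\{\|u\|_{L^2}^2=1\}$ and $\{\|v\|_{L^2}^2=q\}$ in $H^1_0(\Omega)$. One computes directly:
\begin{equation}
\int_\Omega|\nabla v|^2\,dx+\frac12 D(v^2,v^2)=q\int_\Omega|\nabla u|^2\,dx+\frac{q^2}{2}D(u^2,u^2)=q\Big(\int_\Omega|\nabla u|^2\,dx+\frac q2 D(u^2,u^2)\Big),
\end{equation}
using the bilinearity of $D$, which gives $D(qu^2,qu^2)=q^2D(u^2,u^2)$. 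Hence $E_1(v,\Omega)=q\,E_q(u,\Omega)$. Taking the infimum over the constraint sets yields $\F_q(\Omega)=q\,E_q(\Omega)$. The infimum in \eqref{eq:Eq} is attained for the same reason as in Proposition \ref{prop:existence optimal u}, or by transferring the minimizer given by that proposition through the bijection.

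Since $q>0$ is fixed and the volume constraint $|\Omega|=|B_1|$ is identical in \eqref{eq: mainminprob} and \eqref{eq:mainnonex}, the functionals $\Omega\mapsto E_q(\Omega)$ and $\Omega\mapsto \F_q(\Omega)$ differ on the common admissible class by the factor $q$. Consequently $\Omega^*$ minimizes one problem if and only if it minimizes the other, the minimal values satisfy $\min \F_q=q\min E_q$, and existence or nonexistence transfers between the two problems. The only point requiring care is the well-definedness of the rescaling and the homogeneity of $D$; there is no real obstacle.
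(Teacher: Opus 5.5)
Your proposal is correct and uses the same scaling argument as the paper, which rescales an optimal $u$ for $\F_m(\Omega)$ by $m^{-1/2}$ to obtain $E_m(\Omega)=m^{-1}\F_m(\Omega)$ for every admissible $\Omega$. Your use of the full bijection $u\mapsto\sqrt q\,u$ between the two constraint sets makes both inequalities explicit, whereas the paper's one-line proof leaves the reverse inequality implicit.
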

\begin{proof}
Let us fix $\Omega\subset \R^3$ an open set of measure $|B_1|$
	and let $ u\in H^1_0(\Omega)$ with $\int_\Omega  u^2 dx =m$ be
	a function attaining $\mathcal F_m(\Omega)$ by Proposition \ref{prop:existence optimal u}.  Then, since
	$\int_\Omega (m^{-1/2} u)^2 dx =1$,
	\[ E_m(\Omega)=m^{-1}\mathcal F_m(\Omega). \qedhere
	\]
\end{proof}
Let $m>0$ be such that problem \eqref{eq:mainnonex} admits a minimum. We denote by $\mathcal{U}_m$ the set of all optimal functions $u$ associated with some minimizing domain $\Omega$. More precisely,
\begin{equation}\label{eq:Um}
\mathcal{U}_m=\left\{u\in H^1(\R^3)\,:\,\exists\Omega\subset\R^3,\,\text{$\Omega$ is a solution of \eqref{eq:mainnonex} and $u$ is a solution for \eqref{eq:Fm}}\,\right\},
\end{equation}
i.e., $u \in \mathcal{U}_m$ if there exists a domain $\Omega$, which is a minimizer for problem \eqref{eq:mainnonex} at the given $m$, such that $u$ is an optimal state function for $\Omega$.

We now state and show the cornerstone result we need in order to prove Theorem \ref{thm:maintheorem}.
\begin{theorem}\label{thm:nomin}
			Let $\shell$ be a convex bounded open set. Then there exists  $\overline m(\shell)\geq1$ such that for each $m\geq \overline{m}$, exactly one of the following claims hold true: 
			\begin{enumerate}[label=\roman*)]
				\item  problem \eqref{eq:mainnonex} has no solution for such $m$.	
				\item  problem \eqref{eq:mainnonex} admits a solution and
                \begin{equation}\label{eq:hpassurda}
                 \sup_{x\in \R^3}\sup_{r>0} \bigg\{ \frac{1}{m}\int_{\shell_{r+1}(x)\setminus \shell_r(x)} u_{m}^2\,dz \bigg\} > \frac{9}{10}, \quad \forall\; u_m\in \mathcal{U}_m.
                \end{equation}
                where $\shell_r(x)=\{ry+x: y\in \shell\}$.
			\end{enumerate} 
		\end{theorem}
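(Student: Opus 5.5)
We argue by contradiction. Suppose that for some large $m$ problem \eqref{eq:mainnonex} admits a minimizer $\Omega$, and that some $u=u_m\in\mathcal U_m$ violates \eqref{eq:hpassurda}. Then for every $x\in\R^3$ and every $r>0$ the layer $Q_{r+1}(x)\setminus Q_r(x)$ carries at most $\tfrac{9}{10}m$ of the mass. We want to contradict minimality by splitting $u$ along such layers.

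\emph{Competitor.} Fix $x$ and $r$. Choose cut-offs $\eta_1,\eta_2$ with disjoint supports: $\eta_1=1$ on $Q_r(x)$ and $\eta_1=0$ outside $Q_{r+1/3}(x)$, while $\eta_2=1$ outside $Q_{r+1}(x)$ and $\eta_2=0$ on $Q_{r+2/3}(x)$. Both have gradients bounded by a constant depending only on $Q$; here we use convexity of $Q$, since the layers have uniform thickness comparable to $1$. Set $u_i=\eta_i u$. Then $\{u_1\neq0\}\cup\{u_2\neq0\}\subset\Omega$, so translating $u_2$ to infinity keeps the total support measure at most $|B_1|$. We then rescale the mass back to $m$, for instance by adding a tiny far-away droplet or by dilating slightly and using the scaling of $\mathcal F_m$ in $|\Omega|$. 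In the resulting energy comparison:
\begin{itemize}
\item[(a)] the Coulomb gain is $\int u_1^2\,\Phi[u_2^2]\gtrsim m_{\rm in}m_{\rm out}/\mathrm{diam}$-type terms;
\item[(b)] the kinetic cost is $\int|\nabla\eta_i|^2u^2+2\int\eta_i\nabla\eta_i\cdot u\nabla u$, together with the dropped energy of $u$ in the forgotten layer;
\item[(c)] there is an additional Coulomb and kinetic bookkeeping cost from restoring the lost mass $m_{\rm lay}=\int_{Q_{r+1}\setminus Q_r}u^2$, estimated via the Lagrange multiplier $\mu$ of the Euler--Lagrange equation $-\Delta u+\Phi[u^2]u=\mu u$.
\end{itemize}
Dropping $\int_{\rm layer}|\nabla u|^2$ is favorable. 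The cross term in (b) is where we need a Caccioppoli inequality: test the Euler--Lagrange equation with $\eta^2u$ on a slightly wider layer, using $\Phi\ge0$. This gives $\int_{\rm layer}|\nabla u|^2\lesssim(1+\mu)\int_{\rm wider\ layer}u^2$. We also need an upper bound $\mu\lesssim \mathcal F_m/m\lesssim m^{1/2}$, which follows from Appendix~\ref{app2}.

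\emph{Discrete ODE.} Minimality then yields an inequality of the form
\[
\frac{m_{\rm in}(r)\,m_{\rm out}(r)}{R(r)}\lesssim (1+\mu)\,\bigl(m_{\rm in}(r+2)-m_{\rm in}(r-1)\bigr),
\]
where $m_{\rm in}(r)=\int_{Q_r(x)}u^2$ and $R(r)$ is the relevant distance scale, comparable to $r$. Since no layer carries more than $\tfrac{9}{10}m$, there is a $\tfrac{1}{10}$ amount of mass that can be placed on either side. Following Lu--Otto, we iterate this in $r$, starting from a center $x$ around which a nontrivial fraction of the mass lies. The growth forced on $m_{\rm in}$ must then reach $m$ within a radius of order $\sqrt m$, while the total mass is only $m$. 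A careful choice of the starting point and of the window, using $\mu\lesssim\sqrt m$, should give a contradiction once $m\ge\overline m(Q)$. The constants depend on $Q$ through the gradient bounds of the cut-offs and through diameter/inradius ratios.

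\emph{Main obstacle.} The hardest step is controlling the mass $m_{\rm lay}$ lost in the separation layer. It enters both the normalization (c) and the Caccioppoli error, and the hypothesis only caps it at $\tfrac{9}{10}m$. We would first average over $r\in[r_0,r_0+k]$ for a suitable $k$: pigeonholing picks unit layers carrying a small fraction of the mass in that window. This keeps the discrete inequality effective, and the $\tfrac{9}{10}$ threshold is exactly what leaves enough mass outside any single layer for the averaging argument to go through.
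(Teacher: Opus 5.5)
\textbf{The gap is in the conclusion.} Write $V(x,s)=\int_{Q_s(x)}u^2$. Sum your inequality over unit steps $r\in[R,2R]$ with outer radius $4R$. This gives $V(x,4R)-V(x,2R)\le \frac{C\sqrt m}{V(x,R)}\bigl(V(x,2R)-V(x,R)\bigr)$. Iterating dyadically then yields only a \emph{concentration} statement: $V(x,R)\ge C_0\sqrt m$ implies $V(x,2R)\ge m/2$, for every $R>1$.

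That the mass ``must reach $m$ within a radius of order $\sqrt m$ while the total mass is only $m$'' is not contradictory. It is consistent with all the mass sitting in one cube of size $\sqrt m$, whose Coulomb energy is of order $m^{3/2}$ and hence allowed. In Lu--Otto the contradiction comes from an $L^\infty$ bound that is unavailable here, and you propose no substitute. Two ingredients are missing.

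\emph{Anti-concentration from the energy.} Compare $D(u^2,u^2)\ge V(x,R)^2/(\mathrm{diam}(Q)R)$ with $D(u^2,u^2)\le Cm^{3/2}$. This shows that $R_m=\inf\{r>1:\ \exists x,\ V(x,r)\ge C_0m^{4/5}\}$ satisfies $R_m\gtrsim m^{1/10}$, so $R_m>2$ for $m$ large. This is why the threshold is $m^{4/5}$: the threshold $C_0\sqrt m$ would give no lower bound on the radius.

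\emph{Minimal-scale covering.} Take $\widetilde R_m=R_m+\frac12$ and $\widetilde x$ with $V(\widetilde x,\widetilde R_m)\ge C_0m^{4/5}$. Concentration gives $V(\widetilde x,2\widetilde R_m)\ge m/2$. But $Q_{2\widetilde R_m}(\widetilde x)$ is covered by $n=n(Q)$ cubes of radius $\widetilde R_m/2\in(1,R_m)$, and by minimality of $R_m$ each carries less than $C_0m^{4/5}$. Hence $m/2\le nC_0m^{4/5}$, which is false for $m\ge\overline m(Q)$. Without this self-similar step your iteration does not close.

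\textbf{Corrections to the splitting step.} Your splitting step is essentially the paper's Step~1: disjointly supported cut-offs, a Caccioppoli estimate from testing the Euler--Lagrange equation with $\eta^2u$, and $\mu\lesssim\sqrt m$. Three points need fixing.
\begin{enumerate}
\item Restore the lost mass by amplitude rescaling, $\overline C\eta_iu$ with $\overline C^2=m/\int(\eta_1^2+\eta_2^2)u^2$. A far-away droplet or a dilation needs spare volume over which you have no quantitative control.
\item The hypothesis enters exactly, and only, in this rescaling. The layer mass $m_{\mathrm{lay}}\le\frac{9}{10}m$ gives $\overline C^2\le 1+10\,m_{\mathrm{lay}}/m$. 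With $\int|\nabla u|^2+D(u^2,u^2)\lesssim m^{3/2}$, the renormalization then costs at most $C\sqrt m\,m_{\mathrm{lay}}$, which simply joins the right-hand side of the discrete inequality. No smallness of the layer mass and no pigeonholing in $r$ is needed, so what you call the main obstacle is not the real one.
\item The Coulomb gain must use only the exterior mass in $Q_R(x)\setminus Q_{r+1}(x)$, with $R>r+1$ a free parameter. This gives a gain of $V(x,r)\bigl(V(x,R)-V(x,r+1)\bigr)/(\mathrm{diam}(Q)R)$. With all the exterior mass there is no distance scale ``comparable to $r$''.
\end{enumerate}
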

\begin{proof}[Proof of Theorem \ref{thm:nomin}]
We set in this proof, with a slight abuse of notation,
\[
\F( u)=\int_\Omega|\nabla u|^2\, dx+\frac12 D( u^2, u^2),
\]
and we indicate with $C$ a positive constant depending only on $\shell$, which may increase from line to line.
 
Let $m>0$. Suppose \textit{ii)} does not hold. If \eqref{eq:mainnonex} does not admit a solution, \textit{i)} trivially holds. Otherwise there exists $u\in \mathcal{U}_m$ such that 
\begin{equation}\label{eq:striscia con poca massa}
    \sup_{x\in \R^3}\sup_{r>0} \bigg\{ \frac{1}{m}\int_{\shell_{r+1}(x)\setminus \shell_r(x)} u^2\,dz \bigg\} \leq \frac{9}{10}.
\end{equation}
The rest of the proof is devoted to showing that this can happen only if $m\leq \overline m$ for a certain $\overline m(Q)$.
Let us denote by $\Omega$ a minimizer of \eqref{eq:mainnonex} for which $u$ is an optimal function. Then $u\geq 0$ in $\Omega$ and
we extend $ u$ by zero	outside of $\Omega$ and define, for $x\in\R^3$ and $r>0$,
\[
V(x,r)=\int_{\shell_r(x)} u^2{(y)\, dy}.
\]
We divide  the proof into several steps.
					
		\vspace{.3cm} {\bf Step 0: The Euler-Lagrange equation for $u$} 
		
			 We now state the Euler-Lagrange equation solved by a solution $u$ of $\F_m(\Omega)$, which will be useful in the following proof. 
%By appendix \ref{appendix} $q$ and $m$ are linearly comparable. 
By a simple scaling argument, if $\widetilde u$ is a solution of problem \eqref{eq: mainminprob}, then $u=m^{1/2}\widetilde u$ is a minimizer of \eqref{eq:Fm}, see Lemma~\ref{le:equivalence}. Moreover, by direct computation (see~\cite[Lemma~2.4]{MazzoleniMuratovRuffini}),  the first variation of the energy implies that $\widetilde u\in H^1_0(\Omega)$ satisfies weakly in $H^1_0(\Omega)$
\[
-\Delta\widetilde u=\alpha(x)\widetilde u,
\]
where 
		\[
		\alpha(x)=\int_\Omega|\nabla \widetilde u|^2\,dx+mD(\widetilde u^2,\widetilde u^2)-m\Phi[\tilde{u}^2](x).
		\]
		Hence $u\in H^1_0(\Omega)$ solves weakly in $H^1_0(\Omega)$ 
		\begin{equation}\label{eq:ELm}
		-\Delta u=\beta(x)u
		\end{equation}
		where (see Lemma~\ref{le:equivalence})
		\[
		\beta(x)=\left[\frac{1}{m}\left(\int_\Omega|\nabla  u|^2\,dx+D( u^2, u^2)-m\Phi[u^2](x)\right)\right].
		\]
		It is useful to notice that, by the minimality of $\Omega$, there holds  
		\begin{equation}\label{eq:boundC}
		\beta(x)\le m^{-1}\F_m(\Omega)\le C \sqrt{m}, 
\end{equation}		
		for some universal constant $C$, and for $m$ large enough, thanks to estimate~\eqref{eq:boundm32} of Appendix~\ref{app2}.
		
		\vspace{.3cm} {\bf Step 1: A discrete ODE for $V$} 

 We show, following ideas from \cite{LuOtto}, that $V(x,\cdot)$ satisfies a suitable inequality. Precisely we claim that there exists $\gamma=\gamma(\shell)>0$ such that for all $x\in \R^3$, $r>0$, $t\in (0,1]$ and $R>r+t$, the following discrete-ODE type inequality holds:
					\begin{equation}\label{eq:V}
				\Big(V(x,r+t)-V(x,r)\Big)\geq \frac{\gamma}{(\tfrac{1}{t^2}+\sqrt{m})R}\Big(V(x,R)-V(x,r+t)\Big)V(x,r).
			\end{equation}
This step is the technical core of the proof.

\vspace{.5cm}

Let $f_1,f_2\in C^\infty(\R^3,[0,1])$ be cutoff functions such that 
\begin{align}
				&f_1=1 \qquad \text{in $\shell_{r+\tfrac{t}{4}}(x)$}, \qquad &&\text{and	$f_1 = 0$ outside $\shell_{r+\tfrac{t}{2}}(x)$},\\
				&f_2=1 \qquad \text{in $\R^3\setminus \shell_{r+\tfrac{3t}{4}}(x)$}, \qquad &&\text{and $f_2 = 0$ inside $\shell_{r+\tfrac{t}{2}}(x)$}.
\end{align}
			and $|\nabla f_{1}|+|\nabla f_{2}|\le \tfrac{C}{t}$ for some $C > 0$ depending on $\shell$. 
We can now define $u_i=f_i u\in H^1_0(\Omega)$, for $i=1,2$.
        \vspace{0.3cm}

{\bf Step 1.1: Comparing the energies.}
		We define, for $L>1$,
		\[
		\psi_L(z)=C_L\Big( u_1(z)+ u_2(z+L \nu)\Big),\quad z\in \R^3,
		\]
		where $\nu\in\R^3\setminus\{0\}$ is a nonzero vector and $C_L>0$ is a constant chosen so that
		$\|\psi_L\|_{L^2(\R^3)}^{2}=m$.  Let $\overline C$ be the positive constant such that $m=\int_{\R^3}(\overline C u_1)^2+(\overline C u_2)^2\,dx$.  We now aim to show that 
        \begin{enumerate}
        \item[$a)$]$C_L\to\overline C$ as $L\to+\infty$, 
        \item[$b)$]It holds, for $m$ large enough,
			\begin{equation}\label{eq:ee4}
			\overline C^2\leq 1+\frac{10}{m}\int_{\shell_{r+t}(x)\setminus \shell_r(x)} u^2\,dz.
		\end{equation} 
    \end{enumerate}
    Indeed
            \begin{align}\label{eq: ee5}
m=C_L^2\bigg(\int_{\R^3}u_1^2+\int_{\R^3}u_2^2+2\int_{\R^3}u_1(x)u_2(x+L\nu)dx\bigg)=\int_{\R^3}(C_Lu_1)^2+\int_{\R^3}(C_Lu_2)^2+2C_L^2\int_{\R^3}u_1(x)u_2(x+L\nu),
            \end{align}
            so to show the convergence of $C_L$ to $\overline{C}$, it is enough to show that the last term on the right-hand side of the previous formula converges to $0$ as $L$ diverges. Let $\varepsilon>0$, then by density, there exists $v_1,v_2\in C_c^{\infty}(\R^3)$ such that $\|u_i-v_i\|_{L^2(\R^3)}\leq \varepsilon$ for $i\in\{1,2\}$. Thus
            \begin{align*}
                \bigg| \int_{\R^3}u_1(x)u_2(x+L\nu)dx\bigg) \bigg|\leq& \int_{\R^3}|(u_1(x)-v_1(x))u_2(x+L\nu)|dx+ \int_{\R^3}|v_1(x)(u_2-v_2)(x+L\nu)|dx\\&+\int_{\R^3}|v_1(x)v_2(x+Lv)|dx\\ \leq&\eps \sqrt{m}+2\eps \sqrt{m},
            \end{align*}
            where the last inequality holds by Cauchy-Schwarz, the fact that $\int_{\R^3}v_1(x)v_2(x+Lv)dx=0$ for $L$ big enough {and $||v_i||_{L^2(\R^3)}\leq2\sqrt{m}$ for $i\in\{1,2\}$}. So point $a)$ is proved.
            
            Let us now prove point $b)$, that is, \eqref{eq:ee4}. We have
            \[
			\frac{m}{\overline C^2}=\int_{\R^3} u_1^2+ u_2^2\,dz=m-\int_{\shell_{r+t}(x)\setminus \shell_r(x)}u^2-(f_1^2 u^2+f_2^2 u^2)\,dz \geq 0,
			\]			
			so we deduce,
			\[
			\overline C^2\left(1-\frac{1}{m}\int_{\shell_{r+t}(x)\setminus \shell_r(x)} u^2\,dz\right)\leq 1.
			\]

  Recalling \eqref{eq:striscia con poca massa}\footnote{The contradiction assumption~\eqref{eq:striscia con poca massa} is used only here, to have a uniform bound on the renormalizing constant $\overline C$.}, we obtain 
			\begin{align*}\label{eq:Cbar}
				\overline C^2\leq& \frac{1}{1-\frac{1}{m}\int_{\shell_{r+t}(x)\setminus \shell_r(x)} u^2\,dz}= \sum_{i=0}^{\infty} \bigg(\frac{1}{m}\int_{\shell_{r+t}(x)\setminus \shell_r(x)} u^2\,dz\bigg)^i \\ = &1+\sum_{i=0}^{\infty} \bigg(\frac{1}{m}\int_{\shell_{r+t}(x)\setminus \shell_r(x)} u^2\,dz\bigg)^i\bigg(\frac{1}{m}\int_{\shell_{r+t}(x)\setminus \shell_r(x)} u^2\,dz\bigg) 
                \\ \leq& 1+\sum_{i=0}^\infty\left(\frac{9}{10}\right)^i \bigg(\frac{1}{m}\int_{\shell_{r+t}(x)\setminus \shell_r(x)} u^2\,dz\bigg)\\
                =&1+\frac{10}{m}\int_{\shell_{r+t}(x)\setminus \shell_r(x)} u^2\,dz,
			\end{align*} 
            since $\sum_{i\ge0}\left(\frac{9}{10}\right)^i=10$, so also point $b)$ is proved.

            \noindent
		Let us now define $\Omega_L=\{\psi_L>0\}$ and notice that 
			\[
			|\Omega_L|
			\le|\Omega|=|B_1|,
			\]
hence, since $\Om\mapsto\mathcal F(\Om)$ is decreasing
		with respect to set inclusion (as the minimization in its definition
		is done under Dirichlet boundary condition), there holds, for any
		open set $A_L$ of measure ${|B_1|}$ and containing
		$\Omega_L$, that
		\begin{align}
			\F( u)&=\mathcal F_m(\Omega)\\
			&\le \F_m(A_L),\qquad&&\text{by minimality of $\Omega$}\\
			&\le \F_m(\Omega_L)\qquad &&\text{as $\Omega_L\subset A_L$}\\
			&\le \F(\psi_L)\qquad &&\text{as $\psi_L\in H^1_0(\Omega_L)$}.
		\end{align}
				Arguing as in point $a)$ above,	one sees that 
			\[
			\F(\psi_L)\to \F(\overline C u_1)+\F(\overline C u_2),\qquad \text{as $L\to+\infty$},
			\]
			so that 
			\begin{equation}\label{eq:subadditivity}
				\F(\overline C u_1)+\F(\overline C u_2)-\F( u)\ge 0.
			\end{equation}

{\bf Step 1.2: Analysis of the Dirichlet terms of~\eqref{eq:subadditivity} through a Caccioppoli inequality.}
		At this point a direct computation gives   
		\begin{equation}\label{eq:ee1}
			\begin{split}
				|\nabla  u_1|^2+|\nabla  u_2|^2-|\nabla  u|^2&\leq u^2(|\nabla f_1|^2+|\nabla f_2|^2)+2(f_1u\nabla f_1\cdot \nabla u+f_2u\nabla f_2  \cdot\nabla u).
			\end{split}
		\end{equation}
The first term in the right-hand side of \eqref{eq:ee1} satisfies, for some  constant $C=C(Q)$  
			\[
			u^2(|\nabla f_1|^2+|\nabla f_2|^2)\leq \frac{C}{t^2} u^2 \chi_{\shell_{r+t}(x)\setminus \shell_r(x)},
			\]
		since  $f_1$ and $f_2$ are constant outside the strip $\shell_{r+\tfrac{3t}{4}}(x)\setminus \shell_{r+\tfrac{t}{4}}(x)$ and $|\nabla f_{1}|+|\nabla f_{2}|\le \tfrac{C}{t}$. Then the previous inequality yields 
			\begin{equation}\label{eq:ee2}
				\int_{\Omega}u^2(|\nabla f_1|^2+|\nabla f_2|^2)\,dz\le \frac{C}{t^2}\int_{\shell_{r+t}(x)\setminus \shell_r(x)} u^2\,dz = \frac{C}{t^2}\left(V(r+t)-V(r)\right).
			\end{equation} 
			
		We now deal with the second term in the right-hand side of \eqref{eq:ee1}. 
		We claim that  
		\begin{equation}\label{eq:ee3}
\int_\Omega f_1u\nabla f_1\cdot \nabla u+f_2u\nabla f_2  \cdot\nabla u\,dz \le C\left(\frac{1}{t^2}+\sqrt{m}\right)(V(r+t)-V(r)).
		\end{equation}
First, $f_1$ and $f_2$ are constant outside the strip $\shell_{r+\tfrac{3t}{4}}(x)\setminus \shell_{r+\tfrac{t}{4}}(x)$, so  their gradients are zero.
Then, by the definition of $f_1$ and $f_2$ and by  Young's inequality one obtains
		\[
\begin{split}
		\int_{\shell_{r+\frac{3t}{4}}(x)\setminus \shell_{r+\frac{t}{4}}(x)}&(f_1u\nabla f_1\cdot \nabla u+f_2u\nabla f_2  \cdot\nabla u)\,dz\\
		&\le\frac12\int_{\shell_{r+\frac{3t}{4}}(x)\setminus \shell_{r+\frac{t}{4}}(x)}(f_1^2+f_2^2)|\nabla u|^2\,dz+\frac12\int_{\shell_{r+\frac{3t}{4}}(x)\setminus \shell_{r+\frac{t}{4}}(x)}u^2\Big(|\nabla f_1|^2+|\nabla f_2|^2\Big)\,dz\\
		&\leq  C\left(\int_{\shell_{r+\frac{3t}{4}}(x)\setminus \shell_{r+\frac{t}{4}}(x)}  |\nabla u|^2\,dz+\frac{1}{t^2}\int_{\shell_{r+\frac{3t}{4}}(x)\setminus \shell_{r+\frac{t}{4}}(x)}  u^2\,dz\right).
		\end{split}
		\]
      Hence showing  \eqref{eq:ee3} reduces to proving the following Caccioppoli-type inequality: 
		\begin{equation}\label{eq:caccioppoli}
		\int_{\shell_{r+\frac{3t}{4}}(x)\setminus \shell_{r+\frac{t}{4}}(x)}  |\nabla u|^2\,dx\le C\left(\frac{1}{t^2}+\sqrt{m}\right)\left(V(r+t)-V(r)\right).
		\end{equation}
		For this purpose, let $\xi\in C^\infty_c(\shell_{r+t}(x)\setminus \shell_r(x))$ such that $\xi=1$ in $\shell_{r+\tfrac{3t}{4}}(x)\setminus \shell_{r+\tfrac{t}{4}}(x)$ and $|\nabla \xi|\leq \frac{C}{t}$ with $C=C(Q)$. Testing \eqref{eq:ELm} with $\eta=\xi^2 u$ yields, using estimate~\eqref{eq:boundC}, to
		\[
		\begin{aligned}
		\int_\Omega |\nabla u|^2\xi^2\,dz&=\int_{\shell_{r+t}(x)\setminus \shell_r(x)}\beta(x)u^2\xi^2\,dz-\int_{\shell_{r+t}(x)\setminus \shell_r(x)}2u\xi\langle\nabla u,\nabla\xi\rangle\,dz\\
		&\le C\sqrt{m}\int_{\shell_{r+t}(x)\setminus \shell_r(x)}u^2\,dz+C\int_{\shell_{r+t}(x)\setminus \shell_r(x)}u\xi |\nabla u||\nabla \xi|\,dz\\
		&\le C\sqrt{m}\int_{\shell_{r+t}(x)\setminus \shell_r(x)}u^2\,dz+\frac12\int_{\shell_{r+t}(x)\setminus \shell_r(x)}|\nabla u|^2\xi^2\,dz+C\int_{\shell_{r+t}(x)\setminus \shell_r(x)}u^2|\nabla\xi|^2\,dz\\
		&\le C\left(\frac{1}{t^2}+\sqrt{m}\right)\int_{\shell_{r+t}(x)\setminus \shell_r(x)}u^2\,dz+\frac12\int_{\shell_{r+t}(x)\setminus \shell_r(x)}|\nabla u|^2\xi^2\,dz.
		\end{aligned}
		\]
		thus
		\[
		\int_{\shell_{r+\frac{3t}{4}}(x)\setminus \shell_{r+\frac{t}{4}}(x)}|\nabla u|^2\,dz\le C\left(\frac{1}{t^2}+\sqrt{m}\right)\int_{\shell_{r+t}(x)\setminus \shell_r(x)}u^2\,dz,
		\]
so that	 \eqref{eq:caccioppoli}, and hence \eqref{eq:ee3}, holds. Combining~\eqref{eq:ee1}, \eqref{eq:ee2} and~\eqref{eq:ee3}, we obtain (for a  constant $C=C(Q)>0$)
	 \begin{equation}\label{eq:ee6}
	 \int_{\R^3}|\nabla u_1|^2+|\nabla u_2|^2-|\nabla u|^2\,dz\leq C\left(\frac{1}{t^2}+\sqrt{m}\right)(V(r+t)-V(r)).
	 \end{equation}
Now we notice that
	 \begin{equation}\label{eq:boundm}
	 	\int_\Omega |\nabla  u_1|^2+|\nabla u_2|^2\,dz\leq 2\int_\Omega|\nabla  u|^2+2\int_\Omega u^2(|\nabla f_1|^2+|\nabla f_2|^2)\leq C\Big(m^{3/2}+\frac{m}{t^2}\Big),
	 \end{equation}
where the second inequality follows by  minimality and estimate~\eqref{eq:boundm32grad} of Appendix~\ref{app2}.
	 
By~\eqref{eq:ee4}, \eqref{eq:ee6} and~\eqref{eq:boundm} we can finally estimate the gradient terms of~\eqref{eq:subadditivity}:
	 	\begin{equation}\label{eq:lastgrad}
	 	\begin{split}
	 	&\int_{\R^3}\overline C^2|\nabla  u_1|^2+\overline C^2|\nabla  u_2|^2-|\nabla  u|^2\,dz\\
	 	&\le \left(1+\frac{10}{m} \int_{\shell_{r+t}(x)\setminus \shell_r(x)}u^2\,dz\right)\left(\int_{\R^3}|\nabla  u_1|^2+ |\nabla  u_2|^2\,dz\right) -\int_{\R^3}|\nabla  u|^2\,dz \\
	 	&\leq C\left(\frac{1}{t^2}+\sqrt{m}\right)\int_{\shell_{r+t}(x)\setminus \shell_r(x)} u^2\,dz.
	 	\end{split}
	 	\end{equation}

{\bf Step 1.3: Analysis of the repulsive terms of~\eqref{eq:subadditivity}.}
We focus now the Coulombic terms. As 
in~\cite[equation~(3.6)]{LuOtto}, for $R>r+t$ one obtains
\begin{equation}\label{eq:e3}
	\begin{split}
		D( u_1^2, u_1^2)+D( u_2^2, u_2^2)-D( u^2, u^2)&\leq-2\int_{\R^3}\int_{\R^3}\frac{ u_1^2(z) u_2^2(y)}{|z-y|}\,dzdy\\
		&\leq -2\int_{\shell_r(x)}\int_{\shell_R(x)\setminus \shell_{r+t}(x)}\frac{ u_1^2(z) u_2^2(y)}{|z-y|}\,dzdy\\
		&=-2\int_{\shell_r(x)}\int_{\shell_R(x)\setminus \shell_{r+t}(x)}\frac{ u^2(z) u^2(y)}{|z-y|}\,dzdy\\
		&\leq -\frac{2}{{\mathrm{diam}(\shell)}R}\int_{\shell_r(x)}\int_{\shell_R(x)\setminus \shell_{r+t}(x)} u^2({z}) u^2(y)\,d{z}dy.
	\end{split}
\end{equation}
\noindent Moreover, using again the minimality and~\eqref{eq:boundm32grad}, we infer that for $i=1,2$
\begin{equation}\label{eq:boundDuu}
	D(u_i^2,u_i^2)\leq D(u^2,u^2)\le Cm^{3/2}.
\end{equation}
	Thus, recalling the estimate on $\overline C^2$ in~\eqref{eq:ee4}, by \eqref{eq:e3} and~\eqref{eq:boundDuu}, one obtains
	\begin{equation}\label{eq:lastD}
		\begin{aligned}
		&D(\overline C^2 u_1^2,\overline C^2 u_1^2)+D(\overline C^2 u_2^2,\overline C^2 u_2^2)-D( u^2, u^2)\\
		&\le \left(1+\frac{10}{m}\int_{\shell_{r+t}(x)\setminus \shell_r(x)} u^2\,dz\right)^2\Big(D( u_1^2, u_1^2)+D( u_2^2, u_2^2)\Big)-D( u^2, u^2)\\
		&\le 	-\frac{2}{{\mathrm{diam}(\shell)}R}\int_{\shell_r(x)}\int_{\shell_R(x)\setminus \shell_{r+t}(x)} u^2({z}) u^2(y)\,d{z}dy+\frac{C}{m}\bigg(\int_{\shell_{r+t}(x)\setminus \shell_r(x)} u^2\,dz\bigg)\Big(D(u_1^2,u_1^2)+D(u_2^2,u_2^2)\Big)\\
		&\le 	-\frac{2}{{\mathrm{diam}(\shell)}R}\int_{\shell_r(x)}\int_{\shell_R(x)\setminus \shell_{r+t}(x)} u^2({z}) u^2(y)\,d{z}dy+C\sqrt{m}\int_{\shell_{r+t}(x)\setminus \shell_r(x)} u^2\,dz.
\end{aligned} 
	\end{equation}

We can finally deduce the claim~\eqref{eq:V} by plugging~\eqref{eq:lastgrad} and~\eqref{eq:lastD} in~\eqref{eq:subadditivity}.

\vspace{.3cm} {\bf Step 2: The discrete ODE implies a mass concentration argument}

We now employ~\eqref{eq:V} to show that there exists a constant $C_0>0$ depending only on $\shell$ such that for all $m\geq 1$, for every $x\in\R^3$ and $R> 1$ the following implication holds
	\begin{equation}\label{eq:step2}
		V(x, R)\ge C_0\sqrt{m}\quad\Longrightarrow\quad V(x,2 R)\ge\frac{m}{2}.
	\end{equation}
For notational simplicity we write in this step $V(\cdot)$ in place of $V(x,\cdot)$.
Observe that~\eqref{eq:V}, with $4R$ in place of $R$, implies that for any $r\in[R,2R-t]$ there holds
\[
V(r+t)-V(r)\ge \frac{\gamma}{(\tfrac{1}{t^2}+\sqrt{m})4R}V(r)\left(V(4R)-V(r+t)\right).
\]
As $V(r+t)\le V(2R)$ and $V(r)\ge V(R)$ the previous inequality implies that
\[
V(r+t)-V(r)\ge \frac{\gamma}{(\tfrac{1}{t^2}+\sqrt{m})4R}V(R)\left(V(4R)-V(2R)\right).
\]
Now, we choose $t=1$ and $N=\lfloor R \rfloor$, so that the previous inequality implies that (using also that $m\geq 1$)
\[
\begin{aligned}
	V(2R)-V(R)&\geq\sum_{j=1}^N V\left(R+j\right) -V\left(R+j-1\right)\ge\frac{\gamma}{16\sqrt{m}}V(R)\left(V(4R)-V(2R)\right),
\end{aligned}
\]
so that
\[
V(4R)-V(2R)\le\frac{16\sqrt{m}}{\gamma V(R)}\left(V(2R)-V(R)\right).
\]
By rewriting the previous inequality with $2^{i}R$ in place of $R$, for $i\in \N\setminus\{0\}$, we deduce
\begin{equation}\label{eq:stimaVi}
V(2^{i+2}R)-V(2^{i+1}R)\le \frac{16\sqrt{m}}{\gamma  V(2^iR)}\left(V(2^{i+1}R)-V(2^{i}R)\right)
\le
\frac{16\sqrt{m}}{\gamma V(R)}\left(V(2^{i+1}R)-V(2^{i}R)\right)
\end{equation}
where we used that $V(R)\le V(2^{i}R)$. By iterating such an inequality one finally obtains a decay of the form
\[
V(2^{i+2}R)-V(2^{i+1}R)\le\left(\frac{16\sqrt{m}}{\gamma V(R)}\right)^{i+1}\left(V(2R)-V(R)\right).
\]
Now, for $k\in\N$, $k\ge1$ we deduce
\[
V(2^kR)-V(2R)=\sum_{i=2}^k V(2^iR)-V(2^{i-1}R)\le \sum_{i=2}^k\left(\frac{16\sqrt{m}}{\gamma V(R)}\right)^{i-1}\left(V(2R)-V(R)\right).
\]
Let us now fix $C_0=32/\gamma$. Then, if $V(R)\ge C_0 \sqrt{m}$, we have
\[
V(2^kR)-V(2R)\le V(2R)-V(R)\le V(2R).
\]
By sending $k\to+\infty$, we finally arrive to
\[
\frac{m}{2}\le V(2R).
\]
%%%%%%%%%%%%
%%%%%%%%%%%%%

\vspace{.3cm} {\bf Step 3: Conclusion by a covering argument}

Let $C_0$ be the constant from Step 2, $m\geq 1$ and set
\begin{equation}\label{eq: def Rm}
	R_m=\inf\left\{r > 1 \,:\, \exists x\in\R^3,\; V(x,r)\ge
	C_0m^{4/5}\right\}.
\end{equation}
	
	Since we are taking the infimum over a nonempty set, we infer that 
	$R_m<+\infty$. 
    We take $\widetilde{R}_m=R_m+\tfrac12$ and $\widetilde{x}\in \R^3$ with $V(\widetilde{x}, \widetilde{R}_m)\geq C_0m^{4/5}$.
    Using \eqref{eq:boundm32} from Appendix~\ref{app2}, we obtain 
   \begin{equation}\label{eq: Rm > 2}
        \widetilde C m^{3/2}\geq \F(\Omega)\geq D(u^2,u^2)\geq \int_{\shell_{\widetilde{R}_m}(\widetilde{x})}\int_{\shell_{\widetilde{R}_m}(\widetilde{x})}\frac{u^2(z)u^2(y)}{|z-y|}\,dzdy \geq \frac{C}{\widetilde{R}_m} m^{8/5},
   \end{equation}
   thus $\widetilde{R}_m\geq Cm^{1/10}$, so there exists a constant $\widetilde m(Q)$ such that $\widetilde{R}_m> 2$ as $m\geq \widetilde m$. 
Now, by Step 2, since $V(\widetilde{x}, \widetilde{R}_m)\geq C_0m^{4/5}\geq C_0\sqrt{m}$, we have
\[
\frac{m}{2}\le V(\widetilde{x},2\widetilde{R}_m).
\]
As $\shell_{2\widetilde{R}_m}(\widetilde{x})$ can be covered by a uniformly bounded number $n$ of $\shell_{\widetilde{R}_m/2}(x_i)$, for suitable centers $x_i\in \R^3$, we deduce
\[\shell_{2\widetilde{R}_m}(\widetilde{x})\subset\bigcup_{i=1}^{n} \shell_{\widetilde{R}_m/2}(x_i).
\]
Since $1<\widetilde{R}_m/2<R_m$, so that $V(\widetilde{x}, \widetilde{R}_m/2)\leq C_0m^{4/5}$, it follows 
\begin{equation}\label{eq:conclusion}
	\frac{m}{2} \leq V(\widetilde{x},2\widetilde{R}_m)\le \sum_{i=1}^{n}
	V(x_i,\widetilde{R}_m/2)\le nC_0 m^{4/5},
\end{equation}
and we fix $\overline{m}=\max\{(3n C_0)^5, \widetilde m\}$.
Finally, assuming $m \geq \overline{m}$, then claim \textit{i}) holds.

% Using \eqref{eq:boundm32}, we obtain 
%    \begin{equation}\label{eq: Rm > 2}
%         \widetilde C m^{3/2}\geq \F(\Omega)\geq D(u^2,u^2)\geq \int_{\shell_{R_m}(x)}\int_{\shell_{R_m}(x)}\frac{u^2(z)u^2(y)}{|z-y|}\,dzdy \geq \frac{C}{R_m} m^{8/5},
%    \end{equation}
%     thus $R_m\geq Cm^{1/10}$, so there exists a constant $\widetilde m(Q)$ such that $R_m> 2$ as $m\geq \widetilde m$.
% 	Now, by Step 2  we have
% 	\[
% 	\frac{m}{2}\le V(x,2R_m).
% 	\]
% As $\shell_{2R_m}(x)$ can be covered by a universally bounded number $n$ of $\shell_{R_m/2}$, we have
% $\shell_{2R_m}(x)\subset\cup_{i=1}^{n} \shell_{R_m/2}(x_i)$. By our choice of $R_m$, it follows that  
% \begin{equation}\label{eq:conclusion}
% 	\frac{m}{2} \leq V(x,2R_m)\le \sum_{i=1}^{n}
% 	V(x_i,R_m/2)\le nC_0 m^{4/5},
% \end{equation}
% and we fix $\overline{m}=\max\{(3n C_0)^5, \widetilde m\}$.
% Finally, assuming $m \geq \overline{m}$, then claim \textit{i}) holds.
\end{proof}

\begin{remark}
It is worth noting that the definition of $R_m$ in \eqref{eq: def Rm} requires $V(x,r)\ge C_0m^{4/5}$ instead of $V(x,r)\ge C_0\sqrt{m}$, which was the bound used in Step 2. This stronger assumption is necessary, since requiring only $V(x,r)\ge C_0\sqrt{m}$ in \eqref{eq: Rm > 2} is not enough to ensure that $\widetilde R_m > 2$.
\end{remark}

We are now in position to prove the main result of the paper, Theorem~\ref{thm:maintheorem}.
\begin{proof}[Proof of Theorem \ref{thm:maintheorem}]
\
By Lemma \ref{le:equivalence}, we can study problem \eqref{eq:mainnonex} with $q=m$. Let us consider $Q=[-1,1]^3$. Let $Q^i$, for $i\in \{1,2,3\}$, be the following cubes:
\[ Q^1=M_1Q,\quad Q^2=M_2Q,\quad Q^3=Q,\]
where $M_1,M_2$ are two rotations matrices chosen according to Lemma \ref{le: existence rotation}, satisfying \eqref{eq:detnonzero}.
By Theorem \ref{thm:nomin} with $Q=Q^i$, there exist $\overline{m}_i \ge 1$ for $i \in \{1, 2, 3\}$ for which a dichotomy holds true: either a minimizer does not exist, or an anomalous mass concentration occurs. Let us prove that there exists $\overline{m}\geq 1$ such that for all $m\geq \overline m$, problem \eqref{eq:mainnonex} does not admit a minimum. Let $\widehat{m}=\max\{\overline{m}_1,\overline{m}_2,\overline{m}_3\}$ and take $m\geq \widehat m$. We may assume that the problem admits a minimum for this choice of $m$, as otherwise the conclusion trivially holds. Let us take $u\in \mathcal{U}_m$, then by the dichotomy in Theorem \ref{thm:nomin}, there exist $x_i$ and $r_i$ such that
\begin{equation}\label{eq: strisce grandi sui cubi}
    \int_{Q^i_{r_i+1}(x_i)\setminus Q^i_{r_i}(x_i)} u^2dz\geq \frac{9}{10}m,\quad \forall i\in \{1,2,3\}.
\end{equation}
We define
\[ \widetilde Q^i=Q^i_{r_i+1}(x_i)\setminus Q^i_{r_i}(x_i), \]
then it holds
\begin{equation}
    m=\int_{\R^3}u^2\,dz\geq \int_{\widetilde Q^1} u^2 dz+\int_{\widetilde Q^2} u^2dz-\int_{\widetilde Q^1\cap \widetilde Q^2}u^2dz\geq \frac{9}{5}m-\int_{\widetilde Q^1\cap \widetilde Q^2}u^2dz,
\end{equation}
thus 
\[\int_{\widetilde Q^1\cap \widetilde Q^2}u^2dz\geq \frac{8}{10}m.\]
By the same computation using $\widetilde Q^1\cap \widetilde Q^2$ and $\widetilde Q^3$, we obtain
\begin{equation}
    m=\int_{\R^3}u^2\,dz\geq \int_{\widetilde Q^1\cap\widetilde Q^2 } u^2 dz+\int_{\widetilde Q^3} u^2dz-\int_{\widetilde Q^1\cap \widetilde Q^2\cap \widetilde Q^3}u^2dz\geq \frac{17}{10}m-\int_{\widetilde Q^1\cap \widetilde Q^2\cap \widetilde Q^3}u^2dz,
\end{equation}
and so
\[ \int_{\widetilde Q^1\cap \widetilde Q^2\cap \widetilde Q^3}u^2dz\geq\frac{7}{10}m.\]
Since the number of connected components of $\widetilde Q^1\cap \widetilde Q^2\cap \widetilde Q^3$ can be estimated from above by a dimensional constant $N$\footnote{Precisely $N=6^3$, the maximal number of possible intersections of sides of the three cubes.}, we obtain that there exists a connected component $A$ of $\widetilde Q^1\cap \widetilde Q^2\cap \widetilde Q^3$, with diameter uniformly bounded (with respect to $m$) by a universal constant $D$ (thanks to the choice of the rotations of the cubes, see Proposition \ref{prop:rotation}), such that
\[ \int_A u^2dz\geq \frac{7}{10N}m.\]
By the upper bound~\eqref{eq:boundm32} from Appendix~\ref{app2}, we obtain\[
\begin{split}
\widetilde Cm^{3/2}\geq \mathcal F_m(\Omega)\geq \int_{A}\int_{A}\frac{u^2(z)u^2(y)}{|z-y|}\,dzdy\geq\frac{1}{\text{diam}\, A} \left(\int_{A}u^2\,dz\right)^2\geq \frac{1}{D}\bigg(\frac{7}{10N}\bigg)^2 m^2,
\end{split}
\]
which implies that there exists a universal constant $\widetilde{m}$ such that $m\leq \widetilde m$. Then taking $\overline m=\max \{\widetilde m, \widehat m\}$, we conclude.
\end{proof}

\section{Asymptotic of the isoperimetric profile of the energy}\label{sec:asymptotic}
This section is devoted to the proof of Theorem \ref{thm:asymptotic profile}.

\begin{proof}[Proof of Theorem \ref{thm:asymptotic profile}]
Let $\rho = m^{-\frac{1}{4}}$  and define $\mathcal{Q} = \big\{Q_{\rho,i}\big\}_i$ where $Q_{\rho,i}$ are cubes of the type $\rho (Q + z_i)$ where $z_i\in \mathbb{Z}^3$ and $Q$ is the unit cube. Then set 
 \[\mathcal{Q}_{\le} := \left\{Q_\rho \in \mathcal{Q} : |\Omega \cap Q_\rho| \le \frac{1}{2}|Q_\rho|\right\}, \quad \mathcal{Q}_{>} := \left\{Q_\rho \in \mathcal{Q} : |\Omega \cap Q_\rho| > \frac{1}{2}|Q_\rho|\right\}.\]
 By~\eqref{eq:boundm32} from Appendix \ref{app2} there exists a universal $\widetilde C>0$ such that \[		\inf\left\{ \mathcal F_m(\Omega)\,:\, \Omega\subset
		\R^3,\;\text{open, with }|\Omega|=|B_1|\, \right\} \leq \widetilde Cm^{3/2}.\]
        Let us consider any open $\Omega$ such that $|\Omega|=|B_1|$ and $\F_m(\Omega)\leq 2\widetilde Cm^{3/2}$. By Proposition \ref{prop:existence optimal u}, there exists an optimal function $u$ for $\F_m(\Omega)$. We distinguish two different cases.

 \textbf{Case 1:}   $\sum\limits_{Q_\rho\in \mathcal{Q}_>}\int_{\Omega\cap Q_\rho} u^{2}\,dx \geq \frac{m}{2}.$
 
Set 
\begin{equation}\label{def:mu}
d\mu := \frac{u^2}{m}d\mathcal{L}^3
\end{equation}
and consider the quantity
\[
 I(\Omega) := \inf\left\{\int_\Omega\int_\Omega \frac{d\nu(x)d\nu(y)}{|x-y|},\; \nu(\Omega)=1\right\},
 \]
where the infimum is taken among positive Radon probability measures on $\Omega$.
So $\mu(\Omega)=1$ and $I(\Omega)=\frac{1}{\text{Cap}(\Omega)}$, where $\text{Cap}$ stands for the potential capacity, see \cite{landkof1972,LiebLoss}. Notice that $I$ is decreasing with respect to inclusion, i.e., if $\Omega\subset\Omega'$, then $I(\Omega)\ge I(\Omega')$. Then there holds
\begin{equation}\label{eq:C>}
\begin{aligned}
\frac{1}{m^{2}} D(u^{2},u^{2}) 
&= \int_{\Omega} \int_{\Omega} \frac{d\mu(x)d\mu(y)}{|x-y|} 
\ge \sum_{Q_\rho\in\mathcal{Q}_{>}} \int_{\Omega \cap Q_\rho} \int_{\Omega \cap Q_\rho} \frac{d\mu(x) d\mu(y)}{|x-y|} \frac{\mu(\Omega \cap Q_\rho)^{2}}{\mu(\Omega \cap Q_\rho)^{2}} \\
&\ge \sum_{Q_\rho \in \mathcal{Q}_{>}} \mu(\Omega \cap Q_\rho)^{2} I(\Omega \cap Q_\rho)
\ge \sum_{Q_\rho \in \mathcal{Q}_{>}} \mu(\Omega \cap Q_\rho)^{2} I(Q_\rho) 
= \frac{I(Q)}{\rho} \sum_{Q_\rho \in \mathcal{Q}_{>}} \mu(\Omega \cap Q_\rho)^{2},
\end{aligned}
\end{equation}
where we used the monotonicity of $I$ and the rescaling property $\text{Cap}(Q_\rho)=\rho \text{Cap}(Q)$. Since \[
\sum_{Q_\rho\in {\mathcal{Q}_{>}}} \int_{\Omega\cap Q_\rho} u^{2}\,dx \geq \frac{m}{2},
\]
then by the definition of $\mu$ in \eqref{def:mu} we have that $\sum\limits_{Q_\rho \in \mathcal{Q}_{>}} \mu(\Omega \cap Q_\rho) \geq \frac{1}{2}$ and this implies that
\begin{equation}\label{eq:C> 2}
    \sum_{Q_\rho \in \mathcal{Q}_{>}} \mu(\Omega \cap Q_\rho)^{2} \geq  \sum_{Q_\rho \in \mathcal{Q}_{>}}  \frac{\bigg(\sum\limits_{Q_\rho \in \mathcal{Q}_{>}} \mu(\Omega \cap Q_\rho)\bigg)^2}{(\#\mathcal{Q}_{>})^2}\geq \frac{\#\mathcal{Q}_{>}}{4(\#\mathcal{Q}_{>})^{2}}, 
\end{equation}
where $\# $ denotes the cardinality and we used that $\inf\left\{ \sum\limits_{i=1}^N a_i^2 \,:\, \sum\limits_{i=1}^N a_i= b  \right\}$ is attained by $a_i=\frac{b}{N}$. Thus by \eqref{eq:C>} and \eqref{eq:C> 2}, we obtain
\begin{equation}\label{eq:C> 3}
D(u^2,u^2)\geq \frac{I(Q)m^2}{\rho} \sum_{Q_\rho \in \mathcal{Q}_{>}} \mu(\Omega \cap Q_\rho)^{2} \geq  \frac{I(Q)m^2}{4\rho}\frac{1}{\#\mathcal{Q}_{>}}.
\end{equation}
Moreover we have an upper bound on the number of cubes in $\mathcal Q_>$ of the form $\#\mathcal{Q}_>\leq \frac{2|B_1|}{\rho^3}$, since $|\Omega|=|B_1|$, so we obtain
\[ D(u^2,u^2)\geq Cm^{3/2}, \]
where $C= \frac{I(Q)}{8|B_1|}$ and recalling that we fixed $\rho=m^{-\tfrac{1}{4}}$. 

\vspace{1em}
\noindent\textbf{Case 2:}   $\sum\limits_{\Omega\cap Q_\rho\in \mathcal{Q}_>}\int_{Q_\rho} u^{2}\,dx < \frac{m}{2}$. 

Since $\int_\Omega u^2 dx=m,$ then  $\sum\limits_{Q_\rho\in \mathcal{Q}_\leq}\int_{Q_\rho} u^{2}\,dx \geq \frac{m}{2}$. Then it holds
\begin{equation*}
\begin{aligned}
\int_{\Omega} |\nabla u|^{2} dx &\ge \sum_{Q_\rho \in \mathcal{Q}_{\leq}} \int_{Q_\rho } |\nabla u|^{2} \,dx\ge \sum_{Q_\rho \in \mathcal{Q}_{\leq}} \frac{\lambda_{1}^N( Q_\rho)}{6} \int_{Q_\rho } u^{2} \,dx \ge \frac{\lambda^N_{1}(Q_\rho) m}{12} = \frac{\lambda_1^N(Q)}{12\rho^{2}} m =\frac{\lambda_1^N(Q)}{12}m^{3/2},
\end{aligned}
\end{equation*}
where $\lambda^N_{1}(A)$ denotes the first nonzero Neumann eigenvalue of $A$ and the Poincar\'e inequality holds since if $Q_\rho\in\mathcal{Q}_\leq$, then $|\left\{u>0\right\} \cap Q_\rho| \leq |\Omega \cap Q_\rho|\le \frac{1}{2}|Q_\rho|$. In Appendix~\ref{app:poinc} we provide a proof of this well-known result for the sake of completeness, since we could not find an exact reference.

This implies that there exists a universal constant $\overline C$ such that, if $\F_m(\Omega)\leq 2\widetilde Cm^{3/2}$, then $\F_m(\Omega)\geq \overline Cm^{3/2}$.
\end{proof}

\begin{remark}[On the choice of $\rho=m^{-1/4}$]
Although the rigorous proof does not allow to select at once the fully charged and lightly charged cubes, one can guess that $\rho$ must be of order $m^{-1/4}$ since the balance of the two components of the energy is of order \[
m^2\rho^2+m\rho^{-2}.
\]
\end{remark}
	
\appendix
	
\section{Estimate for the functional on union of balls}\label{app2}
In this Appendix we provide an upper bound for the energy $\mathcal F_m$ of order $m^{3/2}$.  The computations are similar to those of~\cite[Section~6]{MazzoleniMuratovRuffini}, but we report them here for the reader's sake. We construct a competitor $\Omega_N$ made up of a suitably chosen quantity of disjoint balls with mutual distance diverging to infinity, for which we can explicitly estimate the energy.
	
Let $\Omega_N=\cup_{i=1}^NB_r(x_i)$, where \[
d=\min\{|x_i-x_j| : i\not=j\}\] 
is diverging to infinity. 
We select $N\in \N$ and $r>0$ so that $|\Omega_N|=|B_1|$; this implies in particular $Nr^3= 1$, that is $r= N^{-1/3}$.

Let $w_B\in H^1_0(B_1)$ be the first Dirichlet eigenfunction of $B_1$ normalized with $\int_{B_1}w_B^2(z)\,dz=m$. We define the test function $\widetilde w$ on $\Omega_N$ by\[
\widetilde w(z)=\sum_{i=1}^N w_B((z-x_i)/r),\qquad z\in \R^3,
\]
and clearly $\widetilde w\in H^1_0(\Omega_N)$.
Therefore (recalling that the balls $B_r(x_i)$ are pairwise disjoint), we have \[
\int_{\R^3}\widetilde w^2(z)\,dz=N\int_{B_r}w_B^2(z/r)\,dz=N\int_{B_1}w_B^2(y)r^3dy=\int_{B_1}w_B^2=m.
\]
Thus, by definition of $\mathcal F_m$,\[
\begin{split}
&\mathcal F_m(\Omega_N)\leq \mathcal F(\widetilde w)= N\int_{B_r}|\nabla w_B(z/r)|^2\,dz+\frac{N}{2}\int_{B_r}\int_{B_r}\frac{w_B^2(z/r)w_B^2(w/r)}{|z-w|}\,dzdw+o_d(1)\\
&=N^{2/3}\int_{B_1}|\nabla w_B|^2\,dy+\frac{N^{-2/3}}{2}D(w_B^2,w_B^2)+o_d(1)\leq \lambda_0(B_1)N^{2/3}m+\frac12(\sqrt{\lambda_0(B_1)}C_{univ}) N^{-2/3}m^2 +o_d(1),
\end{split}
\]
where $o_d(1)$ is a quantity that vanishes as the mutual distance between the centers of the balls diverges.
Minimizing with respect to $N$ gives the optimal choice $N=\lfloor Cm^{3/4} \rfloor$, which leads to\[
\mathcal F_m(\Omega_N)\leq C m^{3/2},
\]
for a universal constant $C>0$.

As a consequence, if $\Omega$ is an optimal set for problem~\eqref{eq:mainnonex}, then we have the bound from above
\begin{equation}\label{eq:boundm32}
\mathcal F_m(\Omega)\leq \widetilde C m^{3/2},
\end{equation}
for a universal $\widetilde C>0$.
We stress that, since both terms of the functional are nonnegative, we also deduce, calling $u$ an optimal function for $\mathcal F_m(\Omega)$, 
\begin{equation}\label{eq:boundm32grad}
\int_\Omega|\nabla u|^2\,dx\leq \widetilde C m^{3/2},\qquad \text{and}\qquad D(u^2,u^2)\leq \widetilde C m^{3/2}.
\end{equation}

\section{Existence of good rotations}\label{approt2}
\begin{lemma}\label{le: existence rotation}
    Let $X=SO(3)\times SO(3)$. Let  
    \[
Y=\bigcup_{(i,j,k)\in\{1,2,3\}}\left\{(M_1,M_2)\in X\,:\, \det({M_1}e_i,{M_2}e_j,e_k)=0  \right\}=\bigcup_{(i,j,k)\in\{1,2,3\}}Y_{i,j,k}.
    \]
Then $|Y|=\mathcal L^6(Y)=0$. In particular, there exists $(M_1,M_2)\in SO(3)\times SO(3)$ such that 
\begin{equation}\label{eq:detnonzero}
 \det({M_1}e_i,{M_2}e_j,e_k)\ne 0  \quad \forall \; i,j,k\in \{1,2,3\}.
\end{equation}
\end{lemma}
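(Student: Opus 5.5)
The plan is to show that each $Y_{i,j,k}$ is a null set in $X$ and then use the fact that a finite union of null sets is null. Here $|\cdot|$ denotes the natural $6$-dimensional measure on $X$: the product Haar measure, or equivalently $\mathcal L^6$ in local analytic charts. This is because $SO(3)\times SO(3)$ is a connected real-analytic manifold of dimension $6$.

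Fix $(i,j,k)$. Consider the function
\[
g_{i,j,k}(M_1,M_2)=\det(M_1e_i,M_2e_j,e_k)=\langle M_1e_i\times M_2e_j,\,e_k\rangle .
\]
It is a polynomial in the entries of $M_1,M_2$, hence real-analytic on $X$. The key fact I will use is that a real-analytic function on a connected analytic manifold that is not identically zero has a zero set of measure zero. In local coordinates this reduces to the classical statement that the zero set of a nontrivial real-analytic function on a connected open subset of $\R^6$ is $\mathcal L^6$-null, proved by induction on the dimension via Fubini. Since $SO(3)$ is connected, $X$ is connected. So it suffices to find a single point where $g_{i,j,k}\neq 0$.

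The nonvanishing is easy to exhibit. The maps $M_1\mapsto M_1e_i$ and $M_2\mapsto M_2e_j$ are onto the unit sphere $S^2$. Choose $M_1$ with $M_1e_i=a$ and $M_2$ with $M_2e_j=b$, where $a,b$ are orthonormal and both orthogonal to $e_k$. Then $a\times b=\pm e_k$, and so $g_{i,j,k}=\pm1\neq0$. Hence each $Y_{i,j,k}$ is null. Since there are only $27$ triples, $|Y|=0$.

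Because $X$ has positive measure, the complement $X\setminus Y$ is nonempty; in fact it is open, dense and of full measure. Any $(M_1,M_2)\in X\setminus Y$ satisfies \eqref{eq:detnonzero}.

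The only step that needs care is the analytic zero-set lemma. If one prefers to avoid it, there is an explicit alternative. For fixed $M_1$, the vector $w=M_1e_i\times e_k$ is nonzero whenever $M_1e_i\neq\pm e_k$, and that condition fails only on a null set of $M_1$. For such $M_1$, the condition $g_{i,j,k}=0$ means $M_2e_j\perp w$. The set of $M_2$ satisfying this is the preimage of a great circle under the submersion $M_2\mapsto M_2e_j$ onto $S^2$, so it is a $2$-dimensional submanifold of the $3$-dimensional $SO(3)$ and is therefore null. Fubini then concludes that $Y_{i,j,k}$ is null.
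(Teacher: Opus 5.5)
Your proposal is correct and follows the paper's proof. For each fixed $(i,j,k)$, $(M_1,M_2)\mapsto\det(M_1e_i,M_2e_j,e_k)$ is a real-analytic function on the connected manifold $SO(3)\times SO(3)$ that is not identically zero, so its zero set is null in local charts, and a finite union of such sets is null; your explicit point where the determinant equals $\pm1$ fills in the paper's unproved remark that $f_{ijk}$ is not identically zero, and your Fubini variant is a valid alternative that avoids the analytic zero-set lemma.
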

\begin{proof}
Clearly we just fix $i,j,k$ and show that $|Y_{i,j,k}|=0$ for each $(i,j,k)\in\{1,2,3\}$.  Let us define $f_{ijk}:X\to\R$ by
\begin{equation}\label{eq: det not 0}
  f_{ijk}(M_1,M_2)=  \det({M_1}{e}_i,{M_2}{e}_j, {e}_k).
\end{equation}
Since the determinant is a polynomial in the matrix entries, each $f_{ijk}$ is a real analytic function defined on the connected manifold $SO(3) \times SO(3)$. By the continuity of $f_{ijk}$, the preimage of $\{0\}$ is closed; hence, every $Y_{ijk}=f_{ijk}^{-1}(\{0\})$ is a closed subset of $SO(3) \times SO(3)$. Finally notice that $f_{ijk}$ is not identically $0$. As a consequence of the Unique Continuation Theorem, $|Y_{ijk}|=|f_{ijk}^{-1}(\{0\})|=0$, working in local charts, since the zero set of a nontrivial real-analytic function defined on an open set has measure zero (see for example~\cite{Mityagin}). 
\end{proof}

Let us define some notation for the following. Let $Q=[-1,1]^3$ and $Q_r(x)=\{ry+x: y\in \shell\}$ for $x\in \R^3$ and $r>0$. Let \[ \widehat Q^i=Q_{r_i+1}(x_i)\setminus Q_{r_i}(x_i) \]
where $x_i\in \R^3$ and $r_i>0$ for $i\in\{1,2,3\}.$

\begin{proposition}\label{prop:rotation}
      Let $\delta_0>0$, then there exists $C=C(\delta_0)>0$ such that for all $M_1,M_2\in SO(3)\times SO(3)$ satisfying
      \[
|\det(M_1 e_i, M_2e_j, e_k)|\ge  \delta_0,\quad  \forall\: i,j,k\in \{1,2,3\},
\]
        \begin{enumerate}
\item $|\widetilde P|\le C$;
\item the number of connected components of $\widetilde P$ is less than or equal to $6^3$;
\item the diameter of each connected component of $\widetilde P$ is bounded by $C$.
    \end{enumerate}
     where for brevity we set    \[
    \widetilde P =M_1\widehat Q^1\cap M_2\widehat Q^2\cap \widehat Q^3.
    \]
     Note that $C$ is in particular independent of $x_i$ and $r_i$, for $i=1,2,3$.
\end{proposition}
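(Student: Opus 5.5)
Each shell $\widehat Q^i$ is a cube frame of thickness $1$. I will decompose it into a bounded number of convex slabs and then intersect slabs coming from the three rotated cubes.

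\textbf{Decomposition of the shells.} Write $Q=[-1,1]^3$, so $Q_r(x)=\{y:\|y-x\|_\infty\le r\}$, up to boundary conventions, which are irrelevant for measure, diameter and the count of components. The shell $\widehat Q^i=Q_{r_i+1}(x_i)\setminus Q_{r_i}(x_i)$ is contained in the union of the six sets
\[
S^i_{k,\pm}=\{y\in Q_{r_i+1}(x_i): \pm (y-x_i)\cdot e_k\in[r_i,r_i+1]\},\qquad k=1,2,3.
\]
Each $S^i_{k,\pm}$ is convex and lies in a slab of width $1$ orthogonal to $e_k$. After applying $M_1$ and $M_2$, the sets $M_1S^1_{k,\pm}$, $M_2S^2_{l,\pm}$, $S^3_{n,\pm}$ are convex pieces contained in unit-width slabs with normals $M_1e_k$, $M_2e_l$ and $e_n$ respectively. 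Hence $\widetilde P$ is contained in the union of at most $6^3$ convex sets of the form
\[
K=M_1S^1_{k,\pm}\cap M_2S^2_{l,\pm}\cap S^3_{n,\pm}.
\]

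\textbf{Each piece is uniformly small.} Each $K$ lies in the intersection of three slabs $\{y: a_1\le y\cdot\nu_1\le a_1+1\}$, $\{y: a_2\le y\cdot\nu_2\le a_2+1\}$, $\{y: a_3\le y\cdot\nu_3\le a_3+1\}$, with $\nu_1=M_1e_k$, $\nu_2=M_2e_l$, $\nu_3=e_n$. Let $A$ be the matrix with rows $\nu_1,\nu_2,\nu_3$. By hypothesis $|\det A|\ge\delta_0$. The intersection of the three slabs is the parallelepiped $A^{-1}\big(\prod_j[a_j,a_j+1]\big)$. Its volume is $|\det A|^{-1}\le\delta_0^{-1}$. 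Its diameter is at most $\sqrt3\,\|A^{-1}\|$. Since the rows of $A$ are unit vectors, the cofactor formula gives $\|A^{-1}\|\le C/|\det A|\le C/\delta_0$. Both bounds are independent of $x_i$ and $r_i$.

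\textbf{Conclusion.} Claim (1) follows at once: $|\widetilde P|\le 6^3\delta_0^{-1}$.

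For claims (2) and (3), note that $\widetilde P$ is exactly the union of the pieces $K$, so every connected component of $\widetilde P$ is a union of some of these convex, hence connected, pieces. The number of components is therefore at most the number of nonempty pieces, which is at most $6^3$. Moreover, a component is a connected chain of at most $6^3$ sets, each of diameter at most $C/\delta_0$, so its diameter is at most $6^3C/\delta_0$.

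\textbf{Expected obstacle.} The main difficulty is bookkeeping rather than analysis. The faces $S^i_{k,\pm}$ overlap along the edges of the cube frame, so I must check two things. First, the overlaps do not break the covering or counting argument. Second, the boundary conventions (open versus closed shells) do not affect the component count. Using the fact that connected unions of convex pieces are again chains of pieces handles both issues.
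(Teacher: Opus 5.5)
Your proposal is correct and follows essentially the paper's route. You cover each rotated frame by six unit-width slab pieces, observe that three slabs with normals $M_1e_k, M_2e_l, e_n$ meet in the parallelepiped $A^{-1}(\text{unit cube})$, and bound its volume by $|\det A|^{-1}$ and its diameter by $\|A^{-1}\|\le \|\mathrm{adj}A\|/\delta_0$. Your treatment of items (2) and (3) is in fact more careful than the paper's, which only bounds the infinite slab intersections and tacitly identifies components with them. You intersect with the outer cubes so that $\widetilde P$ is exactly a union of at most $6^3$ convex pieces, and then chain pieces within a component; to make the boundary conventions airtight, use half-open slabs, or define adjacency by $\overline{K_a}\cap K_b\neq\emptyset$.
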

\begin{remark}
    We stress that there is an abuse of notation between the proof of Theorem \ref{thm:maintheorem} and the statement of Proposition \ref{prop:rotation}. Precisely: here we first define $\widehat{Q}^i$ as cubes with sides parallel to the axis and then rotate those, while in the proof of Theorem \ref{thm:maintheorem} we must first fix the rotations $M_i$ and then define $Q^i=M_iQ$. This is feasible since $C$ is independent of the cube centers $x_i$ and their radii  $r_i$. This choice is done in order to simplify the proof of Proposition \ref{prop:rotation}.
\end{remark}
\begin{proof}[Proof of Proposition \ref{prop:rotation}]
Let $M_1,M_2, \widetilde P$ be as in the statement. We define a generic infinite slab, for $t\in\R$ and $\nu\in\R^3$ as
\[
S(\nu,t)=\{t\le x\cdot \nu\le t+1 \}.
\]
Note that $\widetilde P$ is contained in the union of intersections of slabs of the form $S(R_ie_j,r_i)$ for $i\in \{1,2,3\}$ and $j\in \{1,\dots,6\}$ where $(e_1,e_2,e_3)$ is the canonical basis of $\R^3$, $e_{4}=-e_1, \ e_{5}=-e_2, \ e_{6}=-e_3$ and $R_1=M_1\in SO(3)$, $R_2=M_2\in SO(3)$ and $R_3=Id_{3\times 3}$. More precisely  $\widetilde P$ contains  at most $6^3$ connected components (obtained as the number of possible intersection between all of the slabs).
\[
|\widetilde P|\le \sum_{i,j,k\in \{1,\dots,6\}} |(x_1+S(R_1e_i,r_1))\cap (x_2+S(R_2e_j,r_2))\cap (x_3+S(R_3e_k,r_3))|.
\]
We show now that we  can bound the right-hand side of the previous formula by a suitable choice of $M_1$ and $M_2$. Let us define 
\[
P=P_{i,j,k}=(x_1+S(R_1e_i,r_1))\cap ({x_2+}S(R_2e_j,r_2))\cap ({x_3+}S(R_3e_k,r_3)),\qquad i,j,k\in\{1,\dots,6\}.
\]
Up to a translation (not renamed) of the {form $P+v$ for some $v\in \R^3$}, which does not change the volume, we may write
\[
\begin{aligned}
P&\subseteq \{-1\le x\cdot R_1e_i\le 1 \}\cap \{-1\le x\cdot R_2e_j\le 1 \}\cap \{-1\le x\cdot R_3e_k\le 1 \}\\
&=\{x\,:\, -(1,1,1)^T\le  A_{ijk}x\le (1,1,1)^T\}
\end{aligned}
\]
where the inequality is meant component-wise and where $A=A_{ijk}$ is the matrix with rows $A=(R_1 e_i;R_2e_j;R_3e_k)$. We are left to compute $|P|$. Since $AP\subseteq [-1,1]^3$, we obtain, 
\[
8=|[-1,1]^3|\geq|AP|=|P||\det(A)| 
\]
by the Area formula, so that
\[
|P|\leq\frac{8}{|\det(A)|}\le\frac{8}{\delta_0}.
\]

This proves items $(1)$ and $(2)$. To check point $(3)$ we observe that 
\[
P\subseteq A^{-1}[-1,1]^3=B[-1,1]^3,
\]
so that $y\in P$ implies that $y=Bx$ for some $x\in [-1,1]^3$. Letting $b_i$, $i=1,2,3$, be the rows of $B$, one obtains that
\[
\|y\|^2=\|Bx\|^2\le \sum_{i=1,2,3}|b_i\cdot x|^2\le3\sum_{i=1,2,3}|b_i|^2.
\]
Now, since for $i=1,2,3$, 
\[
\|B\|=\|A^{-1}\|=\frac{\|Adj(A)\|}{|\det A|}\le \frac{c}{\delta_0},
\]
we conclude that $\sum_{i=1,2,3}|b_i|^2\leq  \frac{C}{\delta_0^2}$.
This implies the last statement of the proposition and its proof.
\end{proof}

\section{A Poincar\'e-type inequality}\label{app:poinc}
In this appendix we prove the following Poincar\'e-type inequality.
\begin{theorem}\label{thm:poinc}
Let $\Omega\subset \R^3$ be an open set of finite measure, $\rho>0$, $Q_\rho=\rho Q$, where $Q$ is the unit cube and $u\in H^1_0(\Omega)$ (extended to zero outside $\Omega$) be such that $|\Omega\cap Q_\rho|\leq \tfrac12|Q_\rho|$. Then it holds
\[
 \int_{Q_\rho } |\nabla u|^{2}\,dx \ge \frac{\lambda_{1}^N( Q_\rho)}{6} \int_{Q_\rho } u^{2}\,dx
\]
where $\lambda_1^N(\cdot)$ denotes the first eigenvalue of the Neumann-Laplacian.
\end{theorem}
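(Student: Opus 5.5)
The plan is to apply the classical Neumann Poincaré–Wirtinger inequality on $Q_\rho$ to $u$ and to use the fact that $u$ vanishes on at least half of the cube to control its mean.

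Set $\bar u=\frac{1}{|Q_\rho|}\int_{Q_\rho}u\,dx$. Since $u\in H^1_0(\Omega)$, extended by zero, belongs to $H^1(\R^3)$, its restriction lies in $H^1(Q_\rho)$. The variational characterization of $\lambda_1^N$ then gives
\[
\int_{Q_\rho}|\nabla u|^2\,dx\ \ge\ \lambda_1^N(Q_\rho)\int_{Q_\rho}(u-\bar u)^2\,dx .
\]
It therefore suffices to prove
\[
\int_{Q_\rho}(u-\bar u)^2\,dx\ \ge\ \frac16\int_{Q_\rho}u^2\,dx .
\]

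Write $Z=Q_\rho\setminus\Omega$. Then $u=0$ a.e. on $Z$, and the hypothesis gives $|Z|\ge\frac12|Q_\rho|$. On $Z$ we have $(u-\bar u)^2=\bar u^2$, so
\[
\int_{Q_\rho}(u-\bar u)^2\,dx\ \ge\ |Z|\,\bar u^2\ \ge\ \tfrac12|Q_\rho|\,\bar u^2 .
\]
We also have $\int_{Q_\rho}u^2\,dx=\int_{Q_\rho}(u-\bar u)^2\,dx+|Q_\rho|\bar u^2$. Combining the two displays,
\[
\int_{Q_\rho}u^2\,dx\ \le\ 3\int_{Q_\rho}(u-\bar u)^2\,dx .
\]
This yields the claim with the better constant $\frac13$, and hence with $\frac16$.

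Two technical points need checking, though neither is a real obstacle. First, $u=0$ a.e. on $Q_\rho\setminus\Omega$: this holds because $u$ is extended by zero, and for open $\Omega$ no quasi-everywhere subtleties arise for the Lebesgue measure statement. Second, the Neumann Rayleigh quotient must be applied to functions in $H^1(Q_\rho)$ orthogonal to constants, which $u-\bar u$ is. The constant $\frac16$ in the statement is not sharp, so I do not need to track constants carefully.
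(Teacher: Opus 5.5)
Your proof is correct and follows the paper's argument: you control $\bar u$ using the zero set $Q_\rho\setminus\Omega$, which has measure at least $\tfrac12|Q_\rho|$, and then apply Poincar\'e--Wirtinger. The only difference is that you use the exact identity $\int_{Q_\rho}u^2\,dx=\int_{Q_\rho}(u-\bar u)^2\,dx+|Q_\rho|\bar u^2$, whereas the paper uses $(a+b)^2\le 2a^2+2b^2$; this is why you obtain the constant $\tfrac13$ instead of $\tfrac16$.
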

\begin{proof}
We set the average of $u$ on $Q_\rho$ as
\[
\bar u = \frac{1}{|Q_\rho|}\int_{Q_\rho} u\,dx.
\]
Then
\begin{align*}
  \int_{Q_\rho} u^2\,dx = \int_{Q_\rho} \bigl[(u-\bar u)+\bar u\bigr]^2 \,dx\leq 2\int_{Q_\rho} (u-\bar u)^2+ 2\int_{Q_\rho} \bar u^2 \,dx.
\end{align*}
We estimate the second term of the righ-hand side using the assumption
$|\Omega\cap Q_\rho|\leq \frac12 |Q_\rho|$. Since $u=0$ on
$Q_\rho\setminus\Omega$, one has
\begin{align*}
  \int_{Q_\rho} (u-\bar u)^2\,dx
  \geq \int_{Q_\rho\setminus\Omega} (u-\bar u)^2\,dx = \int_{Q_\rho\setminus\Omega} \bar u^2\,dx = |Q_\rho\setminus\Omega|\,\bar u^2 \geq \frac12 |Q_\rho|\,\bar u^2 = \frac12 \int_{Q_\rho} \bar u^2 \,dx.
\end{align*}
Hence
\begin{align*}
  \int_{Q_\rho} u^2\,dx
  \leq 2(1+2)\int_{Q_\rho}(u-\bar u)^2\,dx \leq \frac{6}{\lambda_1^N(Q_\rho)}
      \int_{Q_\rho}|\nabla u|^2\,dx,
\end{align*}
where in the last inequality we used the Poincar\'e-Wirtinger inequality, with optimal constant the first non-zero Neumann eigenvalue $\lambda_1^N(Q_\rho)$.
Equivalently,
\[
  \int_{Q_\rho}|\nabla u|^2\,dx
  \geq \frac{\mu_1(Q_\rho)}{6}\int_{Q_\rho}u^2\,dx.
\]
\end{proof}

\paragraph{\bf Acknowledgements} 
We warmly thank Cyrill B. Muratov for valuable discussions on the topic of this paper.
The authors have been partially supported by the MUR and the European Union via PRIN2022 project P2022R537CS.
The authors are members of
INdAM-GNAMPA and have been partially supported by the INdAM-GNAMPA projects CUP E5324001950001 and CUP E53C25002010001.  \\

%%%%%%%%%%%%%%%%%%%%%%%%%%%%%%%%%%%%%%%%%%%%%%%%%%%%%%%%%%%%%%%%%%%%%%%%%%%%%%%%%%%%%%%%%%%%%%%%%%%%%%%%%%%%%%%%%%%%%%%%%%%%%%%%%%%%%%%%%%%%%%%%%%%%%%%%%
%%%%%%%%%%%%%%%%%%%%%%%%%%%%%%%%%%%%%%%%%%%%%%%%%%%%%%%%%%%%%%%%%%%%%%%%%%%%%<<<<<<<<<<<<<<<<<<<BIBLIOGRAFIA>>>>>>>>>>>>>%%%%%%%%%%%%%%%%%%%%%%%%%%%%%%%%%%%%%%%%%%%%%%%%%%%%%%%%%%%%%%%%%%%%%%%%%%%%%%%%%%%%%%%%%%%%%%%%%%%%%%%%%%%%%
%%%%%%%%%%%%%%%%%%%%%%%%%%%%%%%%%%%%%%%%%%%%%%%%%%%%%%%%%%%%%%%%%%%%%%%%%%%%%%%%%%%%%%%%%%%%%%%%%%%%%%%%%%%%%%%%%%%%%%%%%%%%%%%%%%%%%%%%%%%%%%%%%%%%%%%%%
\bibliographystyle{abbrv}
\bibliography{bibliografia}	

@article {Mityagin,
    AUTHOR = {Mityagin, B. S.},
     TITLE = {The zero set of a real analytic function},
   JOURNAL = {Mat. Zametki},
  FJOURNAL = {Matematicheskie Zametki},
    VOLUME = {107},
      YEAR = {2020},
    NUMBER = {3},
     PAGES = {473--475},
      ISSN = {0025-567X,2305-2880},
   MRCLASS = {26E05 (26B10)},
  MRNUMBER = {4070868},
MRREVIEWER = {Anna\ Valette-Stasica},
       DOI = {10.4213/mzm12620},
       URL = {https://doi.org/10.4213/mzm12620},
}

@book{GilbargTrudinger,
	author = {Gilbarg, D. and Trudinger, N.S.},
	title = {Elliptic Partial Differential Equations of Second Order},
	journal = {Classics in Mathematics},
	volume = {224},
	publisher = {Springer Berlin, Heidelberg},
	year = {2001}
}

@book{LiebLoss,
	title={Analysis},
	author={Lieb, E.H. and Loss, M.},
	journal={Graduate Studies in Mathematics},
	volumne={14},
	year={2001},
	publisher={American Mathematical Society}
}

@article{BucurButtazzo,
  title = {On the characterization of the compact embedding of {S}obolev spaces},
  volume = {44},
  journal = {Calc. Var. Partial Differential Equations},
  author = {Bucur,  Dorin and Buttazzo,  Giuseppe},
  year = {2011},
  pages = {455-475}
}

@article{MazzoleniMuratovRuffini,
  title={An optimal design problem for a charge qubit},
  author={Mazzoleni, Dario and Muratov, Cyrill B and Ruffini, Berardo},
  journal={Comm. Partial Differential Equations},
  volume={50},
  pages={1029--1073},
  year={2025},
  publisher={Taylor \& Francis}
}

@article{ChoksiPeletier,
  title={Small volume fraction limit of the diblock copolymer problem: {I}. {S}harp-interface functional},
  author={Choksi, Rustum and Peletier, Mark A},
  journal={SIAM J. Math. Anal.},
  volume={42},
  number={3},
  pages={1334--1370},
  year={2010},
  publisher={SIAM}
}

@article{KM1,
  title={On an isoperimetric problem with a competing nonlocal term {I}: {T}he planar case},
  author={Kn{\"u}pfer, Hans and Muratov, Cyrill B},
  journal={Comm. Pure Appl. Math.},
  volume={66},
  number={7},
  pages={1129--1162},
  year={2013},
  publisher={Wiley Online Library}
}

@article{KM2,
  title={On an isoperimetric problem with a competing nonlocal term {II}: {T}he general case},
  author={Kn{\"u}pfer, Hans and Muratov, Cyrill B},
  journal={Comm. Pure Appl. Math.},
  volume={67},
  number={12},
  pages={1974--1994},
  year={2014},
  publisher={Wiley Online Library}
}

@book{Maggibook,
  title={Sets of Finite Perimeter and Geometric Variational Problems: An Introduction to Geometric Measure Theory},
  author={Maggi, Francesco},
  volume={135},
  series={Cambridge Studies in Advanced Mathematics},
  year={2012},
  publisher={Cambridge University Press},
  address={Cambridge}
}

@article {hartree,
    AUTHOR = {Hartree, D.},
     TITLE = {The wave mechanics of an atom with a non-{C}oulomb central field. {P}art {I}: {T}heory and methods},
   JOURNAL = {Math. Proc. Camb. Phil. Soc.},
    VOLUME = {24},
    NUMBER = {1},
      YEAR = {1928},
     PAGES = {89--110},
}

@article {lions,
    AUTHOR = {Lions, P.-L.},
     TITLE = {Solutions of {H}artree-{F}ock equations for {C}oulomb systems},
   JOURNAL = {Comm. Math. Phys.},
    VOLUME = {109},
    NUMBER = {1},
      YEAR = {1987},
     PAGES = {33--97},
}

@article {LuOtto,
    AUTHOR = {Lu, J. and Otto, F.},
     TITLE = {Nonexistence of a minimizer for {T}homas-{F}ermi-{D}irac-von {W}eizs\"{a}cker model},
   JOURNAL = {Comm. Pure Appl. Math.},
    VOLUME = {67},
    NUMBER = {10},
      YEAR = {2014},
     PAGES = {1605--1617},
}

@book {landkof1972,
    AUTHOR = {Landkof, N. S.},
     TITLE = {Foundations of modern potential theory},
    SERIES = {Die Grundlehren der Mathematischen Wissenschaften},
    VOLUME = {180},
 PUBLISHER = {Springer-Verlag, New York-Heidelberg},
      YEAR = {1972},
     PAGES = {x+424},
}

@article{BMP25,
      title={Non-spherical minimizers in the generalized liquid drop model for {Y}ukawa and truncated {C}oulomb potentials}, 
      author={Lia Bronsard and Benoît Merlet and Marc Pegon},
      year={2025},
      journal={arXiv:2510.11893},
      eprint={2510.11893},
      archivePrefix={arXiv},
      primaryClass={math.AP},
      url={https://arxiv.org/abs/2510.11893}, 
}

@article{FFMMM15,
  author  = {Figalli, Alessio and Fusco, Nicola and Maggi, Francesco and Millot, Vincent and Morini, Massimiliano},
  title   = {Isoperimetry and stability properties of balls with respect to nonlocal energies},
  journal = {Comm. Math. Phys.},
  year    = {2015},
  volume  = {336},
  number  = {1},
  pages   = {441--507},
  doi     = {10.1007/s00220-014-2244-1}
}

@article{FKN16,
  author  = {Frank, Rupert L. and Killip, Rowan and Nam, Phan Th{\`a}nh},
  title   = {Nonexistence of large nuclei in the liquid drop model},
  journal = {Letters in Mathematical Physics},
  year    = {2016},
  volume  = {106},
  number  = {8},
  pages   = {1033--1036},
  doi     = {10.1007/s11005-016-0860-2}
}

@article{FN21,
  author  = {Frank, Rupert L. and Nam, Phan Th{\`a}nh},
  title   = {Existence and nonexistence in the liquid drop model},
  journal = {Calc. Var. Partial Differential Equations},
  year    = {2021},
  volume  = {60},
  number  = {6},
  pages   = {223},
  doi     = {10.1007/s00526-021-02058-0}
}

@article{FNV18_CPAM,
  author  = {Frank, Rupert L. and Nam, Phan Th{\`a}nh and Van Den Bosch, Hanne},
  title   = {The ionization conjecture in {T}homas-{F}ermi-{D}irac-von {W}eizs{\"a}cker theory},
  journal = {Comm. Pure Appl. Math.},
  year    = {2018},
  volume  = {71},
  number  = {3},
  pages   = {577--614},
  doi     = {10.1002/cpa.21717}
}

@book{hen17,
  title={Shape optimization and spectral theory},
  editor={Henrot, Antoine},
  year={2017},
  publisher={De Gruyter Open}
}
\end{document}